\documentclass[11pt]{article}

\usepackage[a4paper,margin=1.1in]{geometry}
\usepackage{amsmath,amssymb,amsthm,mathtools}
\usepackage{array}
\usepackage{booktabs}
\usepackage{longtable}
\usepackage{enumitem}
\usepackage{xcolor}
\usepackage[colorlinks=true,linkcolor=blue,citecolor=blue,urlcolor=blue]{hyperref}
\usepackage[capitalize,nameinlink]{cleveref}

\numberwithin{equation}{section}

\newtheorem{theorem}{Theorem}[section]
\newtheorem{proposition}[theorem]{Proposition}
\newtheorem{lemma}[theorem]{Lemma}
\newtheorem{corollary}[theorem]{Corollary}

\theoremstyle{definition}
\newtheorem{definition}[theorem]{Definition}
\newtheorem{classicalinput}[theorem]{Classical Input}

\theoremstyle{remark}
\newtheorem{remark}[theorem]{Remark}

\newcommand{\Bad}{\mathcal B}
\newcommand{\CKN}{\mathrm{CKN}}
\newcommand{\PFE}{\mathrm{PFE}}
\newcommand{\ann}{\mathrm{ann}}
\newcommand{\comm}{\mathrm{comm}}
\newcommand{\tail}{\mathrm{tail}}
\newcommand{\harm}{\mathrm{harm}}
\newcommand{\trunc}{\mathrm{trunc}}
\newcommand{\act}{\mathrm{act}}
\newcommand{\std}{\mathrm{std}}

\newcommand{\loc}{\mathrm{loc}}
\newcommand{\norm}[1]{\left\lVert #1\right\rVert}

\newcommand{\can}{\mathrm{can}}

\newcommand{\cl}{\mathrm{cl}}
\newcommand{\EF}{\mathrm{EF}}
\newcommand{\AEF}{\mathrm{AEF}}
\newcommand{\low}{\mathrm{low}}
\newcommand{\prs}{\mathrm{prs}}
\newcommand{\flux}{\mathrm{flux}}
\newcommand{\en}{\mathrm{en}}
\newcommand{\gate}{\mathrm{gate}}
\newcommand{\obs}{\mathrm{obs}}
\newcommand{\rep}{\mathrm{rep}}
\newcommand{\res}{\mathrm{res}}
\newcommand{\detc}{\mathrm{det}}
\newcommand{\src}{\mathrm{src}}
\newcommand{\chart}{\mathrm{chart}}
\newcommand{\core}{\mathrm{core}}
\newcommand{\Tax}{\operatorname{Tax}}
\newcommand{\Obs}{\operatorname{Obs}}
\newcommand{\Rep}{\operatorname{Rep}}
\newcommand{\Res}{\operatorname{Res}}

\newcommand{\esssup}{\operatorname*{ess\,sup}}

\crefname{classicalinput}{Classical Input}{Classical Inputs}
\Crefname{classicalinput}{Classical Input}{Classical Inputs}
\crefname{assumption}{Assumption}{Assumptions}
\Crefname{assumption}{Assumption}{Assumptions}
\crefname{convention}{Convention}{Conventions}
\Crefname{convention}{Convention}{Conventions}
\title{Finite-Chain CKN-Bad Scale Counting for Navier--Stokes:\\ Standard PDE Closure and Canonical Detector Realization}
\author{Runlong Yu\\
The University of Alabama, Tuscaloosa, AL, USA\\
\texttt{ryu5@ua.edu}}
\date{}

\begin{document}
\maketitle

\begin{abstract}
This paper proves a finite-chain counting theorem for Caffarelli--Kohn--Nirenberg bad scales of suitable weak solutions to the three-dimensional incompressible Navier--Stokes equations.  The main standard-PDE result bounds the weighted size of a finite set of CKN-bad scales by nonnegative channel costs consisting of vertical one-component concentration, annular leakage, pressure-tail terms, and pressure--flux--energy residuals.  The proved closing mechanism is qualitative one-component compactness under a full local critical bound: small vertical component forces CKN smallness at a smaller radius.  The paper then gives a canonical detector realization of the same finite-window counting philosophy.  The original abstract detector is not identified with standard PDE channels; instead, a new amended canonical detector is defined using energy, flux, pressure-tail, retained low-pressure-mode, and finite-dimensional residual coordinates.  For this amended detector, we prove upper realization, lower audit, CKN extraction, and finite-chain bad-scale counting.  
\end{abstract}

\tableofcontents

\section{Introduction}
\label{sec:introduction}

\subsection{Problem and motivation}

The local suitable-weak-solution framework originates in the Leray--Hopf theory of weak solutions \cite{Leray1934,Hopf1951}.  The partial regularity theory of Scheffer and of Caffarelli--Kohn--Nirenberg \cite{Scheffer1976,Scheffer1977,CKN1982}, together with Lin's streamlined proof \cite{Lin1998} and later expositions such as \cite{SereginLectureNotes}, gives the local CKN criterion used here.  In particular, the Caffarelli--Kohn--Nirenberg criterion says that a suitable weak solution
is regular near a point if a scale-critical velocity--pressure quantity is
small on a sufficiently small cylinder.  The finite-chain question is the
converse bookkeeping problem: if a finite chain of normalized scales remains
CKN-bad, which standard PDE channels must concentrate?

The main danger is tautology.  A counting theorem is nearly empty if its
right-hand side already contains the full CKN-badness quantity
\[
  \rho^{-2}\int_{Q_\rho}|u|^3
  +
  \rho^{-2}
  \int_{-\rho^2}^{0}\int_{B_\rho}
  |p-(p)_{B_\rho}(s)|^{3/2}.
\]
This paper therefore excludes the CKN core norm from the detector and
from the pressure--flux--energy residual.  The closing channel used in the proof is the vertical component
\[
  C_3=\int_{Q_1}|u_3|^3,
\]
which is not equivalent to the full CKN quantity.

The one-component mechanism used below is part of a broad line of regularity criteria based on a single component, one derivative, or anisotropic information; see, among others, \cite{KukavicaZiane2006,CaoTiti2011,CheminZhang2016,CheminZhangZhang2017,HanLeiLiZhao2019,KangNguyen2023}.  Quantitative and concentration-based forms of local regularity are also closely related to \cite{BarkerPrange2021,AlbrittonBarkerPrange2023}.  Critical-space regularity and backward-uniqueness methods provide a complementary viewpoint; see \cite{ESS2003}.  The pressure and local-energy bookkeeping used below relies on standard pressure regularity and decomposition ideas, as in \cite{SohrWahl1986,SereginSverak2002,SereginLectureNotes}.  The flux and defect-ledger terminology is also informed by local energy-transfer viewpoints appearing in the Onsager and anomalous-dissipation literature \cite{ConstantinETiti1994,Eyink1994,DuchonRobert2000}.

The finite-window language is related to the author's previous finite-scale one-component and audit formulations \cite{YuOneComponent2026,YuStrict2026,YuSchur2026,YuInvisible2026,YuCriticalLedgers2026,YuSingularityAuditTransfer2026,YuComputationalAntiPhantom2026}.  The present paper is deliberately narrower: it records a conservative standard-PDE counting theorem and a separate canonical-detector realization without claiming a scale-uniform regularity theorem.

\subsection{Main theorem target}

For a finite geometric chain \(r_k=\lambda^kr_0\), define
\[
  \Bad
  :=
  \{0\le k\le K:\Phi_k(\rho_M)\ge\varepsilon_{\CKN}\}.
\]
The main finite-chain estimate has the form
\[
  \sum_{k\in\Bad}w_k
  \le
  \frac{1}{\varepsilon_{\mathrm{close}}(M)}
  \sum_{k=0}^{K}w_k
  \left(
    C_{3,k}
    +
    \mathcal L_k^{\ann}
    +
    \mathcal P_k^{\tail}
    +
    \mathcal R_k^{\PFE}
  \right).
\]
In the version proved here, the closing mechanism uses \(C_{3,k}\).  The
additional pressure--flux--energy channels remain available in the same
nonnegative ledger, but they are not presented as independent substitutes for
the one-component argument.

\subsection{Theorem status}

The qualitative one-component compactness theorem is derived from two imported
classical inputs: compactness of suitable weak solutions under the full
\(\Psi(1)\) bound, including pressure-decomposition convergence on smaller
cylinders, and regularity of compactness limits with \(v_3=0\).  The local
closing lemma and finite-chain counting theorem then follow by elementary
contraposition and summation.  A stronger theorem in which annular leakage or
pressure-tail terms independently force CKN smallness is not claimed here.

\subsection{Organization}

\Cref{sec:preliminaries} defines normalized scales, \(\Phi\), and the full
critical bound \(\Psi\).  \Cref{sec:channels} defines the non-tautological
standard channel costs.  \Cref{sec:one-component} records the classical inputs
and proves the qualitative one-component compactness theorem from them.
\Cref{sec:closing} proves the conservative local closing lemma.  \Cref{sec:counting}
proves the single-scale and finite-chain counting estimates.
\Cref{sec:criterion} records the finite-chain CKN-small scale criterion.
\Cref{sec:audit-relation} explains the optional audit-framework
interpretation.  \Cref{sec:obligations} lists the remaining stronger proof
obligations.

\section{Preliminaries}
\label{sec:preliminaries}

\subsection{Scale normalization}

Let \((u,p)\) be a suitable weak solution on
\[
  Q_{r_0}(z_0)=B_{r_0}(x_0)\times(t_0-r_0^2,t_0).
\]
Fix
\[
  0<\lambda<1,
  \qquad
  r_k=\lambda^kr_0,
  \qquad
  k=0,\ldots,K.
\]
The normalized fields on \(Q_1=B_1\times(-1,0)\) are
\[
  u_k(y,s)=r_k u(x_0+r_ky,t_0+r_k^2s),
  \qquad
  p_k(y,s)=r_k^2p(x_0+r_ky,t_0+r_k^2s).
\]

\subsection{CKN quantity}

For \(0<\rho<1\), write
\[
  Q_\rho:=B_\rho\times(-\rho^2,0),
  \qquad
  (f)_{B_\rho}(s):=\frac{1}{|B_\rho|}\int_{B_\rho}f(y,s)\,dy.
\]

\begin{definition}[CKN quantity]
\label{def:ckn-quantity}
For normalized fields \((u,p)\), define
\[
  \Phi(\rho)
  :=
  \rho^{-2}\int_{Q_\rho}|u|^3
  +
  \rho^{-2}
  \int_{-\rho^2}^{0}\int_{B_\rho}
  |p-(p)_{B_\rho}(s)|^{3/2}.
\]
For the scale-\(k\) fields we write \(\Phi_k(\rho)\).
\end{definition}

\begin{remark}[Classical endpoint]
The CKN epsilon-regularity theorem
\cite{CKN1982,Lin1998} is used only as a classical
endpoint:
\[
  \Phi_k(\rho)<\varepsilon_{\CKN}
  \quad\Longrightarrow\quad
  \text{regularity on a smaller physical cylinder.}
\]
This paper does not reprove CKN epsilon regularity.
\end{remark}

\subsection{Full local critical bound}

\begin{definition}[Full critical bound]
\label{def:psi}
For normalized fields \((u,p)\), define
\[
  A(1):=\operatorname*{ess\,sup}_{-1<s<0}\int_{B_1}|u(y,s)|^2\,dy,
\]
\[
  E(1):=\int_{Q_1}|\nabla u|^2,
  \qquad
  C(1):=\int_{Q_1}|u|^3,
\]
and
\[
  D(1):=
  \int_{-1}^{0}\int_{B_1}|p-(p)_{B_1}(s)|^{3/2}.
\]
Set
\[
  \Psi(1):=A(1)+E(1)+C(1)+D(1).
\]
For the scale-\(k\) fields we write \(\Psi_k(1)\).
\end{definition}

\begin{remark}[Why \(\Psi\), not only \(\Phi\)]
The full bound \(\Psi(1)\le M\) is the finite-window compactness hypothesis
used in the one-component argument.  The theorem does not rely only on
\(\Phi(1)\le M\).
\end{remark}

\section{Non-Tautological Standard Channel Costs}
\label{sec:channels}

\subsection{Cutoff convention}

Fix
\[
  \eta,\chi\in C_c^\infty(B_1),
  \qquad
  \eta=\chi=1\text{ on }B_{3/4},
\]
and
\[
  A_\chi:=
  \operatorname{supp}\nabla\chi\cup\operatorname{supp}\Delta\chi.
\]
Let
\[
  A_\eta:=B_1\cap\operatorname{supp}(1-\eta).
\]

\subsection{Annular leakage}

\begin{definition}[Annular and full leakage]
\label{def:leakage}
The annular leakage at scale \(k\) is
\[
  \mathcal L_k^{\ann}
  :=
  \int_{-\lambda^2}^{0}\int_{\lambda A_\chi}
  \left(
    |u_k|^2
    +
    |\nabla u_k|^2
    +
    |u_k|^3
    +
    |p_k||u_k|
  \right).
\]
If a proof uses core energy or dissipation terms, use instead
\[
\begin{aligned}
  \mathcal L_k^{\mathrm{full}}
  &:=
  \int_{-\lambda^2}^{0}\int_{\lambda B_1}
  \left(|u_k|^2+|\nabla u_k|^2\right)\\
  &\quad+
  \int_{-\lambda^2}^{0}\int_{\lambda A_\chi}
  \left(|u_k|^3+|p_k||u_k|\right).
\end{aligned}
\]
\end{definition}

\begin{remark}[Honest leakage convention]
The final theorem may use \(\mathcal L_k^{\ann}\) only when the proof uses
\(\mathcal L_k^{\ann}\).  If a proof requires \(\mathcal L_k^{\mathrm{full}}\),
the final theorem must use the full leakage.
\end{remark}

\subsection{Pressure-tail costs}

For each scale set
\[
  F_{k,ij}^{\act}:=\eta\,u_{k,i}u_{k,j},
  \qquad
  P_k^{\act}:=R_iR_j(F_{k,ij}^{\act}),
  \qquad
  P_k^{\harm}:=p_k-P_k^{\act}.
\]

\begin{definition}[Pressure-tail cost]
\label{def:pressure-tail}
At the radius \(\rho_M\) selected by the one-component compactness theorem,
define
\[
  \mathcal P_k^{\comm}
  :=
  \norm{[\eta,R_iR_j](u_{k,i}u_{k,j})}_{L^{3/2}(Q_{\rho_M})}^{3/2}.
\]
Let \(\mathcal P_k^{\harm}\) be a genuine harmonic-pressure tail or
finite-dimensional harmonic residual, and let \(\mathcal P_k^{\trunc}\) be an
optional finite-rank pressure-tail truncation error.  These are assumed to be
nonnegative channel costs.  The pressure-tail cost is
\[
  \mathcal P_k^{\tail}
  :=
  \mathcal P_k^{\comm}
  +
  \mathcal P_k^{\harm}
  +
  \mathcal P_k^{\trunc}.
\]
\end{definition}

\begin{remark}[No CKN pressure core hidden]
\(\mathcal P_k^{\harm}\) and \(\mathcal P_k^{\trunc}\) must not be defined as
the full scale-invariant pressure oscillation
\[
  \rho_M^{-2}
  \int_{-\rho_M^2}^{0}\int_{B_{\rho_M}}
  |p_k-(p_k)_{B_{\rho_M}}(s)|^{3/2}.
\]
Nor may the pressure-tail cost be paired with a hidden copy of
\(\rho_M^{-2}\int_{Q_{\rho_M}}|u_k|^3\) so as to reconstruct
\(\Phi_k(\rho_M)\).  If a pressure compactness argument needs the full
mean-free pressure oscillation, the stronger pressure-tail closing lemma has
not yet been proved.
\end{remark}

\begin{proposition}[Separated-support commutator bound]
\label{prop:commutator-bound}
For each normalized scale \(k\),
\[
  \mathcal P_k^{\comm}
  \le
  C_{\eta,\rho_M}
  \int_{(-1,0)\times A_\eta}|u_k|^3.
\]
\end{proposition}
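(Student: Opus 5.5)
The plan is to exploit the fact that $\eta\equiv1$ near the evaluation cylinder, so the commutator only sees the part of $u_{k,i}u_{k,j}$ supported away from $B_{\rho_M}$. There the Riesz kernel is smooth, and a crude kernel bound turns the $L^{3/2}$ norm into an $L^1$ norm of $|u_k|^2$ on $A_\eta$, hence into $\int|u_k|^3$ on $A_\eta$.

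I adopt the convention implicit in the definition of $P_k^{\act}$: for each fixed time $s$, the product $f_{ij}(\cdot,s):=u_{k,i}u_{k,j}(\cdot,s)$ is taken on $B_1$ and extended by zero to $\mathbb R^3$, and $R_iR_j$ acts in the spatial variable only. I also use that the radius $\rho_M$ supplied by the one-component compactness theorem satisfies $\rho_M<3/4$; if necessary I would shrink it to, say, $\rho_M\le 1/2$. The constant $C_{\eta,\rho_M}$ will then depend on $\operatorname{dist}(B_{\rho_M},\operatorname{supp}(1-\eta))\ge 3/4-\rho_M$.

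\emph{Step 1 (reduction to an off-support operator).} For $y\in B_{\rho_M}$ we have $\eta(y)=1$, so
\[
[\eta,R_iR_j]f_{ij}(y,s)=\eta(y)\,R_iR_j f_{ij}(y,s)-R_iR_j(\eta f_{ij})(y,s)=R_iR_j\bigl((1-\eta)f_{ij}\bigr)(y,s).
\]
Here $(1-\eta)f_{ij}(\cdot,s)$ is supported in $A_\eta=B_1\cap\operatorname{supp}(1-\eta)$. This set lies at distance at least $d:=3/4-\rho_M>0$ from $B_{\rho_M}$.

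\emph{Step 2 (kernel bound).} Write $R_iR_j g=-\tfrac13\delta_{ij}g+\mathrm{p.v.}\,K_{ij}*g$, where $K_{ij}(z)=c\,(3z_iz_j-\delta_{ij}|z|^2)/|z|^5$. The local delta term vanishes on $B_{\rho_M}$ because the supports are disjoint. The principal value is an absolutely convergent integral with $|K_{ij}(y-x)|\le C d^{-3}$. Hence, pointwise for $y\in B_{\rho_M}$,
\[
|[\eta,R_iR_j]f_{ij}(y,s)|\le C d^{-3}\int_{A_\eta}|u_k(x,s)|^2\,dx .
\]
Taking the $L^{3/2}(B_{\rho_M})$ norm costs a factor $|B_{\rho_M}|^{2/3}$. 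By Hölder's inequality,
\[
\int_{A_\eta}|u_k|^2\le|A_\eta|^{1/3}\Bigl(\int_{A_\eta}|u_k|^3\Bigr)^{2/3}.
\]
Raising to the power $3/2$ gives, for each $s\in(-\rho_M^2,0)$,
\[
\int_{B_{\rho_M}}|[\eta,R_iR_j](u_{k,i}u_{k,j})|^{3/2}\,dy\le C\,d^{-9/2}\rho_M^{3}\,|A_\eta|^{1/2}\int_{A_\eta}|u_k(x,s)|^3\,dx .
\]

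\emph{Step 3 (time integration).} Integrate over $s\in(-\rho_M^2,0)\subset(-1,0)$ and sum over the finitely many pairs $(i,j)$, using that $(a+b)^{3/2}\lesssim a^{3/2}+b^{3/2}$ if the norm is of the summed tensor. This yields the claim with $C_{\eta,\rho_M}\sim d^{-9/2}\rho_M^3|A_\eta|^{1/2}$.

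The only genuinely delicate point is Step 1. It relies on the zero-extension convention for the product outside $B_1$, since otherwise $\eta f$ and $f$ would not differ only on $A_\eta$. It also relies on $\rho_M$ being strictly inside the region $\{\eta=1\}=B_{3/4}$. I would state both explicitly before the estimate; everything after that is a routine Calderón--Zygmund off-diagonal bound.
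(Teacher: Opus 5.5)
Your proposal is correct and follows essentially the same route as the paper's proof. You use $\eta=1$ on $B_{\rho_M}$ to reduce the commutator to $R_iR_j((1-\eta)u_{k,i}u_{k,j})$ supported in $A_\eta$, bound the off-diagonal kernel uniformly on $B_{\rho_M}\times A_\eta$, pass from the resulting $L^1$ bound to $\int_{A_\eta}|u_k|^3$ via the finite measure of $A_\eta$, and integrate in time. No shrinking of $\rho_M$ is needed (nor permitted, since $\rho_M$ is fixed upstream): Classical Input~\ref{ass:zero-component-ckn} already gives $\rho_M<3/4$, so your separation $d=3/4-\rho_M>0$ holds as stated.
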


\begin{proof}
Let \(T_{ij}=R_iR_j\) and \(f_{ij}=u_{k,i}u_{k,j}\).  On
\(B_{\rho_M}\), the cutoff satisfies \(\eta=1\).  Hence
\[
  [\eta,T_{ij}]f_{ij}
  =
  T_{ij}\bigl((1-\eta)f_{ij}\bigr)
  \qquad\text{on }B_{\rho_M}.
\]
The support of \((1-\eta)f_{ij}\) lies in \(A_\eta\), which is separated from
\(B_{\rho_M}\) because \(\rho_M<3/4\) and \(\eta=1\) on \(B_{3/4}\).  The
kernel of \(T_{ij}\) is therefore smooth and bounded on
\(B_{\rho_M}\times A_\eta\).  For almost every time \(s\),
\[
  |[\eta,T_{ij}]f_{ij}(x,s)|
  \le
  C_{\eta,\rho_M}\int_{A_\eta}|f(y,s)|\,dy,
  \qquad x\in B_{\rho_M}.
\]
Taking the \(L^{3/2}(B_{\rho_M})\)-norm and using the finite measure of
\(A_\eta\) gives
\[
  \norm{[\eta,T_{ij}]f_{ij}(\cdot,s)}_{L^{3/2}(B_{\rho_M})}
  \le
  C_{\eta,\rho_M}
  \norm{f(\cdot,s)}_{L^{3/2}(A_\eta)}.
\]
Taking the \(L^{3/2}\)-norm in time and raising to the \(3/2\)-power yields
\[
  \mathcal P_k^{\comm}
  \le
  C_{\eta,\rho_M}
  \int_{-1}^{0}
  \norm{u_k(\cdot,s)\otimes u_k(\cdot,s)}_{L^{3/2}(A_\eta)}^{3/2}\,ds.
\]
Since
\[
  \norm{u_k(\cdot,s)\otimes u_k(\cdot,s)}_{L^{3/2}(A_\eta)}^{3/2}
  =
  \int_{A_\eta}|u_k(y,s)|^3\,dy,
\]
the desired estimate follows.
\end{proof}

\subsection{PFE residual}

\begin{definition}[Non-tautological PFE residual]
\label{def:pfe-residual}
The term \(\mathcal R_k^{\PFE}\) denotes a selected pressure--flux--energy
residual or excess, such as a pressure decomposition mismatch, flux-balance
residual, local-energy inequality defect, finite-rank PFE truncation error, or
reduced detector-comparison residual.  It is a nonnegative cost.

It is part of the definition that \(\mathcal R_k^{\PFE}\) does not contain
\[
  \rho_M^{-2}\int_{Q_{\rho_M}}|u_k|^3
  \quad\text{or}\quad
  \rho_M^{-2}
  \int_{-\rho_M^2}^{0}\int_{B_{\rho_M}}
  |p_k-(p_k)_{B_{\rho_M}}(s)|^{3/2}.
\]
Thus \(\mathcal R_k^{\PFE}\) is not allowed to be a disguised copy of the full
CKN core quantity \(\Phi_k(\rho_M)\).
\end{definition}

\begin{definition}[Vertical component and standard cost]
\label{def:standard-cost}
Define
\[
  C_{3,k}:=\int_{Q_1}|(u_k)_3|^3.
\]
The non-tautological standard cost is
\[
  \mathfrak C_{\std,k}
  :=
  C_{3,k}
  +
  \mathcal L_k^{\ann}
  +
  \mathcal P_k^{\tail}
  +
  \mathcal R_k^{\PFE}.
\]
\end{definition}

For an unindexed normalized solution we use the same notation without the
subscript \(k\), writing \(C_3\), \(\mathcal L^{\ann}\),
\(\mathcal P^{\tail}\), \(\mathcal R^{\PFE}\), and
\(\mathfrak C_{\std}\).

\begin{remark}[Leakage actually used below]
The proof of the main standard-PDE counting estimate uses only \(C_{3,k}\le
\mathfrak C_{\std,k}\) and the nonnegativity of the remaining costs.  It does
not use core energy or core dissipation terms.  Therefore the theorem-level
ledger may retain the annular leakage \(\mathcal L_k^{\ann}\); the full leakage
\(\mathcal L_k^{\mathrm{full}}\) is recorded only for future variants whose
proofs genuinely require it.
\end{remark}

\section{One-Component Compactness}
\label{sec:one-component}

\begin{classicalinput}[Compactness of suitable weak solutions under a full critical bound]
\label{ass:suitable-compactness}
For every \(M<\infty\), every sequence of suitable weak solutions
\((u^{(j)},p^{(j)})\) on \(Q_1\) satisfying
\[
  \Psi^{(j)}(1)\le M
\]
has a subsequence and a suitable weak solution \((v,q)\) on \(Q_1\) such that
\(\Psi_v(1)\le M\) and
\[
  u^{(j)}\to v
  \quad\text{strongly in }L^3_{\loc}(Q_1).
\]
Moreover, at each fixed radius \(0<\rho<1\) used below, the local pressure
decomposition gives convergence of the mean-free pressure components
\[
  p^{(j)}-(p^{(j)})_{B_\rho}(s)
  \to
  q-(q)_{B_\rho}(s)
  \quad\text{strongly in }L^{3/2}(Q_\rho).
\]
\end{classicalinput}

\begin{classicalinput}[CKN-smallness of the \(v_3=0\) limiting class]
\label{ass:zero-component-ckn}
For every \(M<\infty\), there exists \(0<\rho_M<3/4\) such that every
compactness-limit suitable weak solution \((v,q)\) on \(Q_1\) satisfying
\[
  \Psi_v(1)\le M,
  \qquad
  v_3=0,
\]
satisfies
\[
  \Phi_{v,q}(\rho_M)<\frac12\varepsilon_{\CKN}.
\]
\end{classicalinput}

\paragraph{Pressure compactness content.}
The velocity compactness in Classical Input~\ref{ass:suitable-compactness} is the standard local compactness theorem for suitable weak solutions under the local energy, dissipation, \(L^3\)-velocity, and \(L^{3/2}\)-pressure bounds; see \cite{CKN1982,Lin1998,SereginLectureNotes}.  The pressure part is the pressure-decomposition compactness needed in this paper, and is consistent with the pressure-regularity framework in \cite{SohrWahl1986,SereginSverak2002,SereginLectureNotes}.  On
\(B_\rho\Subset B_{\rho'}\Subset B_1\), write
\[
  p^{(j)}
  =
  R_iR_\ell(\eta_{\rho'}u_i^{(j)}u_\ell^{(j)})
  +
  h^{(j)}
\]
with \(\eta_{\rho'}=1\) on \(B_{\rho'}\).  The active terms converge strongly
in \(L^{3/2}\) because \(u^{(j)}\to v\) strongly in \(L^3_{\loc}\) and the
Calderon--Zygmund map is bounded on \(L^{3/2}\).  The remainders \(h^{(j)}\)
are harmonic in space on \(B_{\rho'}\), uniformly bounded in \(L^{3/2}\) after
subtracting time-dependent constants, and hence compact in \(L^{3/2}(B_\rho)\)
by interior harmonic estimates.  Subtracting the spatial mean over \(B_\rho\)
removes the pressure gauge.  Thus the displayed pressure convergence is exactly
the standard pressure-compactness conclusion imported here; if one does not
import it, it must be proved separately.

\paragraph{Why the \(v_3=0\) input is classical.}
The condition \(v_3=0\), together with incompressibility, gives
\(\nabla_h\cdot v_h=0\).  The third component of the equation gives
\(\partial_3 q=0\) in distributions.  The limiting horizontal system is the
standard zero-vertical-component, or \(2.5\)-dimensional, Navier--Stokes
limiting system; its local regularity is the qualitative endpoint behind the
known one-component regularity criteria; compare, for example, \cite{KukavicaZiane2006,CaoTiti2011,CheminZhang2016,CheminZhangZhang2017,HanLeiLiZhao2019,KangNguyen2023}.  This paper uses only the consequence
stated in Classical Input~\ref{ass:zero-component-ckn}: for the compact class
bounded by \(\Psi_v(1)\le M\), one can choose a radius \(\rho_M\), depending
only on \(M\), at which the CKN quantity is below
\(\varepsilon_{\CKN}/2\).

\paragraph{Status of the imported inputs.}
Classical Inputs~\ref{ass:suitable-compactness} and~\ref{ass:zero-component-ckn}
are the PDE inputs behind
the qualitative one-component argument.  They are stated explicitly so that the
finite-chain counting theorem does not hide either pressure compactness or a
zero-component regularity theorem inside informal prose.

\begin{theorem}[Qualitative one-component epsilon theorem]
\label{thm:one-component}
Assume Classical Inputs~\ref{ass:suitable-compactness}
and~\ref{ass:zero-component-ckn}.
For every \(M<\infty\), there exist
\[
  \varepsilon_3(M)>0,
  \qquad
  0<\rho_M<1,
\]
such that if \((u,p)\) is a suitable weak solution on \(Q_1\) satisfying
\[
  \Psi(1)\le M
\]
and
\[
  \int_{Q_1}|u_3|^3\le\varepsilon_3(M),
\]
then
\[
  \Phi(\rho_M)<\varepsilon_{\CKN}.
\]
\end{theorem}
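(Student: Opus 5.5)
I plan to argue by contradiction and compactness, taking \(\rho_M\) to be the radius from Classical Input~\ref{ass:zero-component-ckn}. Since that radius satisfies \(0<\rho_M<3/4\), it is in particular in \((0,1)\), and it depends only on \(M\). Suppose no \(\varepsilon_3(M)>0\) works for this \(\rho_M\). Then there is a sequence of suitable weak solutions \((u^{(j)},p^{(j)})\) on \(Q_1\) with
\[
  \Psi^{(j)}(1)\le M,
  \qquad
  \int_{Q_1}|u^{(j)}_3|^3\le \frac1j,
  \qquad
  \Phi^{(j)}(\rho_M)\ge\varepsilon_{\CKN}.
\]

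First I apply Classical Input~\ref{ass:suitable-compactness}. Passing to a subsequence, this gives a suitable weak solution \((v,q)\) with \(\Psi_v(1)\le M\). Along this subsequence, \(u^{(j)}\to v\) strongly in \(L^3_{\loc}(Q_1)\), and \(p^{(j)}-(p^{(j)})_{B_{\rho_M}}\to q-(q)_{B_{\rho_M}}\) strongly in \(L^{3/2}(Q_{\rho_M})\).

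Next I identify the limit as \(v_3=0\). For every compact \(K\Subset Q_1\), strong \(L^3(K)\) convergence gives
\[
  \int_K|v_3|^3=\lim_j\int_K|u^{(j)}_3|^3\le\limsup_j\frac1j=0 .
\]
Exhausting \(Q_1\) by compacts yields \(v_3=0\) a.e. Hence \((v,q)\) is a compactness-limit suitable weak solution in the class of Classical Input~\ref{ass:zero-component-ckn}, and therefore \(\Phi_{v,q}(\rho_M)<\tfrac12\varepsilon_{\CKN}\).

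Finally I pass to the limit in \(\Phi\) at the fixed radius \(\rho_M\). Because \(\overline{Q_{\rho_M}}\) is a compact subset of \(B_1\times(-1,0]\), I need \(L^3\) convergence up to the top time \(s=0\). I plan to read "\(L^3_{\loc}(Q_1)\)" in the parabolic sense, meaning local in space and up to the final time, as is standard for CKN compactness. If that reading is not available, I would instead fix \(\rho_M\) slightly differently, or note that the time slab near \(0\) has small measure and use uniform integrability from the \(\Psi\) bound; this is the one technical point to watch. With this convergence,
\[
  \rho_M^{-2}\int_{Q_{\rho_M}}|u^{(j)}|^3\to\rho_M^{-2}\int_{Q_{\rho_M}}|v|^3 .
\]
The assumed pressure convergence gives the same for the mean-free pressure term. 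Thus \(\Phi^{(j)}(\rho_M)\to\Phi_{v,q}(\rho_M)<\tfrac12\varepsilon_{\CKN}\), which contradicts \(\Phi^{(j)}(\rho_M)\ge\varepsilon_{\CKN}\) for large \(j\).

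The conclusion is that some \(\varepsilon_3(M)>0\) works. The constant is non-constructive, which is consistent with the "qualitative" label. The main obstacle is not in this argument itself but in the imported inputs. The argument only needs convergence of \(\Phi\) at the single radius \(\rho_M\), and that is exactly the pressure clause in Classical Input~\ref{ass:suitable-compactness}.
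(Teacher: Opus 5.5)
Your proposal is correct and follows the paper's proof step for step. Like the paper, you argue by contradiction at the radius \(\rho_M\) from Classical Input~\ref{ass:zero-component-ckn}, extract a limit via Classical Input~\ref{ass:suitable-compactness}, identify \(v_3=0\), and pass to the limit in \(\Phi\) at that single radius. Your worry about convergence up to the top time \(s=0\) is a point the paper passes over, and your uniform-integrability fallback settles it, since \(A(1)+E(1)\le M\) gives a uniform \(L^{10/3}(Q_1)\) bound.
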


\begin{proof}
Let \(\rho_M\) be the radius supplied by
Classical Input~\ref{ass:zero-component-ckn}.  Suppose no such
\(\varepsilon_3(M)\) exists.
Then for every \(j\) there is a suitable weak solution
\((u^{(j)},p^{(j)})\) on \(Q_1\) with
\[
  \Psi^{(j)}(1)\le M,
  \qquad
  \int_{Q_1}|u^{(j)}_3|^3\le j^{-1},
\]
but
\[
  \Phi_{u^{(j)},p^{(j)}}(\rho_M)
  \ge
  \varepsilon_{\CKN}.
\]

By Classical Input~\ref{ass:suitable-compactness}, after passing to a
subsequence,
\[
  u^{(j)}\to v
  \quad\text{strongly in }L^3_{\loc}(Q_1),
\]
the limit \((v,q)\) is a suitable weak solution with \(\Psi_v(1)\le M\), and
the mean-free pressure components converge strongly in \(L^{3/2}(Q_{\rho_M})\).
Since
\[
  \int_{Q_1}|u^{(j)}_3|^3\to0,
\]
the strong \(L^3_{\loc}\) convergence gives \(v_3=0\).
Therefore Classical Input~\ref{ass:zero-component-ckn} applies to the limit and gives
\[
  \Phi_{v,q}(\rho_M)<\frac12\varepsilon_{\CKN}.
\]
The strong \(L^3(Q_{\rho_M})\) convergence of the velocities and the strong
\(L^{3/2}(Q_{\rho_M})\) convergence of the mean-free pressures give
\[
  \Phi_{u^{(j)},p^{(j)}}(\rho_M)
  \to
  \Phi_{v,q}(\rho_M).
\]
Hence
\[
  \Phi_{u^{(j)},p^{(j)}}(\rho_M)<\varepsilon_{\CKN}
\]
for all sufficiently large \(j\), contradicting the choice of the sequence.
Thus some \(\varepsilon_3(M)>0\) exists.
\end{proof}

\begin{remark}[Non-tautological status]
\Cref{thm:one-component} is conditional on the classical compactness and
zero-component limiting inputs stated above.  Its right-hand side uses only
\(\int_{Q_1}|u_3|^3\), not the full CKN core norm.
\end{remark}

\section{Non-Tautological Local Closing}
\label{sec:closing}

\begin{theorem}[Non-tautological local compactness closing lemma]
\label{thm:local-closing}
Assume Classical Inputs~\ref{ass:suitable-compactness}
and~\ref{ass:zero-component-ckn}.
For every \(M<\infty\), let \(\varepsilon_3(M)\) and \(\rho_M\) be supplied by
\cref{thm:one-component}.  Set
\[
  \varepsilon_{\mathrm{close}}(M):=\varepsilon_3(M).
\]
If \((u,p)\) is a suitable weak solution on \(Q_1\) satisfying
\[
  \Psi(1)\le M
\]
and
\[
  \mathfrak C_{\std}(u,p)\le\varepsilon_{\mathrm{close}}(M),
\]
then
\[
  \Phi(\rho_M)<\varepsilon_{\CKN}.
\]
Equivalently,
\[
  \Phi(\rho_M)\ge\varepsilon_{\CKN}
  \quad\Longrightarrow\quad
  \mathfrak C_{\std}(u,p)\ge\varepsilon_{\mathrm{close}}(M).
\]
\end{theorem}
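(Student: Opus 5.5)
The plan is to reduce \cref{thm:local-closing} directly to \cref{thm:one-component} by comparing the standard cost with its vertical-component summand. Fix \(M<\infty\) and let \(\varepsilon_3(M)\) and \(\rho_M\) be the constants supplied by \cref{thm:one-component}. Then set \(\varepsilon_{\mathrm{close}}(M)=\varepsilon_3(M)\). The radius \(\rho_M<3/4\) is the same one used to define \(\mathcal P^{\comm}\) in \cref{def:pressure-tail}, so the ledger \(\mathfrak C_{\std}\) is well defined at this radius.

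The first step is the channel comparison. By \cref{def:standard-cost},
\[
  \mathfrak C_{\std}(u,p)=C_3+\mathcal L^{\ann}+\mathcal P^{\tail}+\mathcal R^{\PFE}.
\]
Each remaining summand is nonnegative. The annular leakage \(\mathcal L^{\ann}\) is an integral of nonnegative quantities. The commutator part of \(\mathcal P^{\tail}\) is a power of a norm, and \(\mathcal P^{\harm}\), \(\mathcal P^{\trunc}\), \(\mathcal R^{\PFE}\) are nonnegative by definition (\cref{def:pressure-tail,def:pfe-residual}). Hence
\[
  \int_{Q_1}|u_3|^3=C_3\le\mathfrak C_{\std}(u,p)\le\varepsilon_{\mathrm{close}}(M)=\varepsilon_3(M).
\]

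The second step combines this with the hypothesis \(\Psi(1)\le M\). Together they are exactly the hypotheses of \cref{thm:one-component}, which yields \(\Phi(\rho_M)<\varepsilon_{\CKN}\). The equivalent reformulation is just the contrapositive. If \(\Phi(\rho_M)\ge\varepsilon_{\CKN}\) while \(\Psi(1)\le M\), then \(\mathfrak C_{\std}\le\varepsilon_{\mathrm{close}}(M)\) is impossible. This gives \(\mathfrak C_{\std}>\varepsilon_{\mathrm{close}}(M)\), and in particular \(\mathfrak C_{\std}\ge\varepsilon_{\mathrm{close}}(M)\).

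There is no real analytic obstacle here, since all the PDE content sits in \cref{thm:one-component} and the classical inputs. The only points needing care are bookkeeping. First, the nonnegativity of every non-vertical channel must be checked honestly, without assuming any sign on hidden terms. Second, the \(\rho_M\) used in the pressure-tail definition must coincide with the one from \cref{thm:one-component}. Third, the implication must be read as holding under the standing bound \(\Psi(1)\le M\).
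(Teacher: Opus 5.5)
Your proposal is correct and follows the paper's proof: nonnegativity of the remaining channels gives \(C_3\le\mathfrak C_{\std}\le\varepsilon_3(M)\), \cref{thm:one-component} then yields \(\Phi(\rho_M)<\varepsilon_{\CKN}\), and the second statement is the contrapositive under the standing bound \(\Psi(1)\le M\). Your observation that the contrapositive actually gives the strict inequality \(\mathfrak C_{\std}>\varepsilon_{\mathrm{close}}(M)\) is a harmless sharpening of the stated conclusion.
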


\begin{proof}
Every term in \(\mathfrak C_{\std}\) is nonnegative and
\[
  C_3\le \mathfrak C_{\std}.
\]
Therefore
\[
  \mathfrak C_{\std}(u,p)\le\varepsilon_{\mathrm{close}}(M)
  =
  \varepsilon_3(M)
\]
implies
\[
  \int_{Q_1}|u_3|^3=C_3\le\varepsilon_3(M).
\]
Applying \cref{thm:one-component} gives
\[
  \Phi(\rho_M)<\varepsilon_{\CKN}.
\]
The contrapositive is the second displayed implication.
\end{proof}

\begin{remark}[What this proves]
The proof of \cref{thm:local-closing} uses the vertical component channel.
The annular leakage, pressure-tail cost, and PFE residual are honest
nonnegative standard costs in the same ledger, but this theorem does not prove
that those additional channels alone force CKN smallness.
\end{remark}

\begin{remark}[Stronger PFE closing remains open]
To prove a stronger pressure--flux--energy closing lemma, one must show that
small annular leakage, pressure-tail cost, and PFE residual force compactness
into a CKN-clean limiting class.  In particular, pressure convergence must be
obtained from active-pressure convergence, harmonic compactness, and a genuine
tail condition.  It may not be replaced by the full CKN pressure oscillation.
\end{remark}

\section{Single-Scale and Finite-Chain Counting}
\label{sec:counting}

\subsection{Single-scale concentration}

\begin{corollary}[CKN-bad scale forces standard channel concentration]
\label{cor:single-scale}
Assume Classical Inputs~\ref{ass:suitable-compactness}
and~\ref{ass:zero-component-ckn}.
Let \((u,p)\) be a suitable weak solution on \(Q_1\) with
\[
  \Psi(1)\le M.
\]
If
\[
  \Phi(\rho_M)\ge\varepsilon_{\CKN},
\]
then
\[
  C_3+\mathcal L^{\ann}+\mathcal P^{\tail}+\mathcal R^{\PFE}
  \ge
  \varepsilon_{\mathrm{close}}(M).
\]
\end{corollary}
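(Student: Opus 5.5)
This corollary is a direct restatement of the contrapositive form of \cref{thm:local-closing}, so the plan is simply to invoke that theorem with the same \(M\) and unpack the definition of \(\mathfrak C_{\std}\). Fix \(M<\infty\) and let \(\varepsilon_3(M)\), \(\rho_M\) be the constants supplied by \cref{thm:one-component}. Then \(\varepsilon_{\mathrm{close}}(M)=\varepsilon_3(M)\) as in \cref{thm:local-closing}, and \(\rho_M\) is the same radius at which the pressure-tail cost \(\mathcal P^{\tail}\) of \cref{def:pressure-tail} is evaluated.

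The key steps, in order, are as follows. First, I would argue by contradiction: suppose \(\Psi(1)\le M\), \(\Phi(\rho_M)\ge\varepsilon_{\CKN}\), and yet
\[
  \mathfrak C_{\std}(u,p)=C_3+\mathcal L^{\ann}+\mathcal P^{\tail}+\mathcal R^{\PFE}<\varepsilon_{\mathrm{close}}(M).
\]
The displayed sum is exactly the standard cost of \cref{def:standard-cost}. In particular \(\mathfrak C_{\std}(u,p)\le\varepsilon_{\mathrm{close}}(M)\), so the hypotheses of \cref{thm:local-closing} hold. That theorem then yields \(\Phi(\rho_M)<\varepsilon_{\CKN}\), which contradicts the assumption. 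Equivalently, one can quote the second displayed implication of \cref{thm:local-closing} verbatim.

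No step is a genuine obstacle. The only point requiring care is bookkeeping consistency. The radius \(\rho_M\) in the badness hypothesis must be the one from \cref{thm:one-component}, which in turn comes from Classical Input~\ref{ass:zero-component-ckn}. The costs \(\mathcal L^{\ann}\), \(\mathcal P^{\tail}\), \(\mathcal R^{\PFE}\) must be the nonnegative quantities fixed in \cref{sec:channels}, so that \(C_3\le\mathfrak C_{\std}\) holds. Both conditions are built into the definitions, so the argument closes immediately.
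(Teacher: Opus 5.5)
Your proposal is correct and matches the paper's proof, which simply cites the contrapositive implication of \cref{thm:local-closing}. Your contradiction framing, in which a strict \(<\) gives \(\le\) and the theorem then applies, is the same argument written out. As a side remark, it actually yields the slightly stronger strict inequality \(\mathfrak C_{\std}(u,p)>\varepsilon_{\mathrm{close}}(M)\).
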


\begin{proof}
This is the contrapositive statement in \cref{thm:local-closing}.
\end{proof}

\subsection{Finite-chain counting}

\begin{definition}[CKN-bad scale set at the closing radius]
\label{def:bad-set}
For a finite chain define
\[
  \Bad
  :=
  \{0\le k\le K:\Phi_k(\rho_M)\ge\varepsilon_{\CKN}\}.
\]
\end{definition}

\begin{theorem}[Weighted finite-chain bad-scale counting]
\label{thm:weighted-counting}
Assume Classical Inputs~\ref{ass:suitable-compactness}
and~\ref{ass:zero-component-ckn}.
Let \((u,p)\) be a suitable weak solution on \(Q_{r_0}(z_0)\), and let
\[
  r_k=\lambda^kr_0,
  \qquad
  k=0,\ldots,K.
\]
Assume
\[
  \Psi_k(1)\le M,
  \qquad
  k=0,\ldots,K.
\]
Then for any weights \(w_k\ge0\),
\[
  \sum_{k\in\Bad}w_k
  \le
  \frac{1}{\varepsilon_{\mathrm{close}}(M)}
  \sum_{k=0}^{K}w_k
  \left(
    C_{3,k}
    +
    \mathcal L_k^{\ann}
    +
    \mathcal P_k^{\tail}
    +
    \mathcal R_k^{\PFE}
  \right).
\]
\end{theorem}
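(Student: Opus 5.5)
The plan is to apply \cref{cor:single-scale} scale by scale and then sum with the weights. No further PDE input is needed beyond Classical Inputs~\ref{ass:suitable-compactness} and~\ref{ass:zero-component-ckn}, which are already built into the corollary.

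\emph{Step 1 (each rescaled field satisfies the hypotheses).} Fix \(k\in\{0,\ldots,K\}\). The rescaled pair \((u_k,p_k)\) is a suitable weak solution on \(Q_1\), because the Navier--Stokes scaling \(u\mapsto r u(x_0+r\,\cdot,t_0+r^2\cdot)\), \(p\mapsto r^2p(\cdot)\) preserves the equations and the local energy inequality. It also requires \(Q_{r_k}(z_0)\subset Q_{r_0}(z_0)\), which holds since \(r_k\le r_0\). By assumption \(\Psi_k(1)\le M\). The radius \(\rho_M\) and the constant \(\varepsilon_{\mathrm{close}}(M)=\varepsilon_3(M)\) depend only on \(M\), so they are the same at every scale. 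This uniformity is the only point needing care; it holds because \cref{thm:one-component} produces \(\rho_M\) and \(\varepsilon_3(M)\) from \(M\) alone.

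\emph{Step 2 (pointwise inequality).} Write \(\mathfrak C_{\std,k}=C_{3,k}+\mathcal L_k^{\ann}+\mathcal P_k^{\tail}+\mathcal R_k^{\PFE}\ge0\). I will show
\[
  \mathbf 1_{\Bad}(k)\le \frac{\mathfrak C_{\std,k}}{\varepsilon_{\mathrm{close}}(M)}
  \qquad\text{for every }k.
\]
If \(k\in\Bad\), then \(\Phi_k(\rho_M)\ge\varepsilon_{\CKN}\), and \cref{cor:single-scale} applied to \((u_k,p_k)\) gives \(\mathfrak C_{\std,k}\ge\varepsilon_{\mathrm{close}}(M)\). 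If \(k\notin\Bad\), the left side is \(0\) and the right side is nonnegative, because every channel cost is nonnegative by \cref{def:pressure-tail,def:pfe-residual,def:standard-cost} and the leakage is an integral of nonnegative terms.

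\emph{Step 3 (summation).} Multiply the pointwise inequality by \(w_k\ge0\) and sum over \(k=0,\ldots,K\). The left side becomes \(\sum_{k\in\Bad}w_k\), and the right side is exactly the stated bound.
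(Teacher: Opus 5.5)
Your proposal is correct and matches the paper's proof. You apply \cref{cor:single-scale} to each rescaled pair \((u_k,p_k)\) with \(k\in\Bad\), multiply by \(w_k\ge0\), sum, and use the nonnegativity of every channel cost to extend the right-hand sum to all \(0\le k\le K\); your extra checks (that rescaling preserves suitability, and that \(\rho_M\) and \(\varepsilon_{\mathrm{close}}(M)\) depend only on \(M\)) make explicit what the paper leaves implicit.
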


\begin{proof}
For each \(k\in\Bad\), Corollary~\ref{cor:single-scale} gives
\[
  \varepsilon_{\mathrm{close}}(M)
  \le
  C_{3,k}
  +
  \mathcal L_k^{\ann}
  +
  \mathcal P_k^{\tail}
  +
  \mathcal R_k^{\PFE}.
\]
Multiplying by \(w_k\ge0\), summing over \(k\in\Bad\), and enlarging the
right-hand sum to all \(0\le k\le K\) gives the estimate.
\end{proof}

\begin{corollary}[Unweighted finite-chain bad-scale counting]
\label{cor:unweighted-counting}
Under the hypotheses of \cref{thm:weighted-counting},
\[
  \#\Bad
  \le
  \frac{1}{\varepsilon_{\mathrm{close}}(M)}
  \sum_{k=0}^{K}
  \left(
    C_{3,k}
    +
    \mathcal L_k^{\ann}
    +
    \mathcal P_k^{\tail}
    +
    \mathcal R_k^{\PFE}
  \right).
\]
\end{corollary}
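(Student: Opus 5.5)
The plan is to specialize \cref{thm:weighted-counting} to the constant weight choice \(w_k:=1\) for every \(0\le k\le K\). These weights are nonnegative, and the hypotheses of \cref{cor:unweighted-counting} are exactly those of \cref{thm:weighted-counting}, so the weighted estimate applies as stated. With \(w_k=1\), the left-hand side becomes
\[
  \sum_{k\in\Bad}1=\#\Bad .
\]
The right-hand side becomes
\[
  \frac{1}{\varepsilon_{\mathrm{close}}(M)}\sum_{k=0}^{K}\bigl(C_{3,k}+\mathcal L_k^{\ann}+\mathcal P_k^{\tail}+\mathcal R_k^{\PFE}\bigr).
\]
This is precisely the claimed inequality.

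For a self-contained alternative that avoids invoking the weighted statement, argue directly from \cref{cor:single-scale}. For each \(k\in\Bad\), the normalized fields \((u_k,p_k)\) are a suitable weak solution on \(Q_1\) with \(\Psi_k(1)\le M\) and \(\Phi_k(\rho_M)\ge\varepsilon_{\CKN}\). Hence \(\mathfrak C_{\std,k}\ge\varepsilon_{\mathrm{close}}(M)\), and summing over \(\Bad\) gives \(\#\Bad\,\varepsilon_{\mathrm{close}}(M)\le\sum_{k\in\Bad}\mathfrak C_{\std,k}\). Every channel cost is nonnegative, so this sum can be enlarged to run over all \(0\le k\le K\). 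Since \(\varepsilon_{\mathrm{close}}(M)=\varepsilon_3(M)>0\) by \cref{thm:one-component}, dividing by it is legitimate.

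There is no genuine obstacle here. The only points to check are that \(\varepsilon_{\mathrm{close}}(M)>0\), so the division makes sense, and that each \(\mathfrak C_{\std,k}\) is a nonnegative (possibly infinite) quantity, so enlarging the index set of the sum is harmless. Both follow from \cref{def:standard-cost} and \cref{thm:one-component}.
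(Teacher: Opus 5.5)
Your proposal is correct and matches the paper's proof, which simply takes \(w_k=1\) in \cref{thm:weighted-counting}. Your alternative direct argument via \cref{cor:single-scale} reproduces the paper's proof of the weighted theorem itself, specialized to unit weights.
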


\begin{proof}
Take \(w_k=1\) in \cref{thm:weighted-counting}.
\end{proof}

\section{Finite-Chain CKN-Small Scale Criterion}
\label{sec:criterion}

\begin{corollary}[Finite-chain CKN-small scale criterion]
\label{cor:finite-chain-criterion}
Assume the hypotheses of \cref{thm:weighted-counting}, and assume
\(\sum_{k=0}^{K}w_k>0\).  If
\[
  \sum_{k=0}^{K}w_k
  \left(
    C_{3,k}
    +
    \mathcal L_k^{\ann}
    +
    \mathcal P_k^{\tail}
    +
    \mathcal R_k^{\PFE}
  \right)
  <
  \varepsilon_{\mathrm{close}}(M)
  \sum_{k=0}^{K}w_k,
\]
then there exists \(k_*\in\{0,\ldots,K\}\) with \(w_{k_*}>0\) such that
\[
  \Phi_{k_*}(\rho_M)<\varepsilon_{\CKN}.
\]
Consequently, CKN epsilon regularity gives regularity on a smaller cylinder
inside the physical cylinder \(Q_{r_{k_*}}(z_0)\).
\end{corollary}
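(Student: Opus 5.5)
The plan is to argue by contradiction from the weighted counting estimate of \cref{thm:weighted-counting}, restricted to the scales that carry positive weight. Write $S:=\{0\le k\le K: w_k>0\}$, which is nonempty because $\sum_{k=0}^K w_k>0$. Suppose, contrary to the claim, that every $k\in S$ satisfies $\Phi_k(\rho_M)\ge\varepsilon_{\CKN}$, that is, $S\subset\Bad$.

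Since all weights are nonnegative, this inclusion gives
\[
  \sum_{k=0}^{K}w_k=\sum_{k\in S}w_k\le\sum_{k\in\Bad}w_k .
\]
Applying \cref{thm:weighted-counting} to the right-hand side and multiplying by $\varepsilon_{\mathrm{close}}(M)>0$, I obtain
\[
  \varepsilon_{\mathrm{close}}(M)\sum_{k=0}^{K}w_k
  \le
  \sum_{k=0}^{K}w_k\bigl(C_{3,k}+\mathcal L_k^{\ann}+\mathcal P_k^{\tail}+\mathcal R_k^{\PFE}\bigr).
\]
This contradicts the strict hypothesis. Hence some $k_*\in S$ has $\Phi_{k_*}(\rho_M)<\varepsilon_{\CKN}$, and $w_{k_*}>0$ holds by the definition of $S$.

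It remains to prove the regularity consequence, which is the classical CKN endpoint recorded in the remark after \cref{def:ckn-quantity}.
\begin{itemize}
\item[(i)] $\Phi_{k_*}$ is computed from the rescaled pair $(u_{k_*},p_{k_*})$ on $Q_{\rho_M}$.
\item[(ii)] Undoing the scaling $y\mapsto x_0+r_{k_*}y$, $s\mapsto t_0+r_{k_*}^2s$ identifies $Q_{\rho_M}$ with the physical cylinder $Q_{\rho_M r_{k_*}}(z_0)\subset Q_{r_{k_*}}(z_0)$.
\item[(iii)] Both $\Phi$ and the suitable-weak-solution property are invariant under the Navier--Stokes scaling.
\item[(iv)] Epsilon regularity applied to the normalized solution therefore gives boundedness on a smaller normalized cylinder, and scaling back gives regularity on a smaller cylinder inside $Q_{r_{k_*}}(z_0)$.
\end{itemize}

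The only point needing care is this last transfer. I must make sure that the CKN endpoint, in the form used by the paper, is stated at radius $\rho_M$ with mean-free pressure. If it is instead normalized at unit radius, I would rescale once more by the factor $\rho_M$, which leaves $\Phi$ unchanged by scale invariance. The rest is purely elementary summation.
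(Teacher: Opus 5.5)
Your proposal is correct and follows essentially the same argument as the paper. If every positive-weight scale lay in $\Bad$, the weighted counting theorem would give $\varepsilon_{\mathrm{close}}(M)\sum_{k=0}^{K}w_k$ at most the weighted cost sum, contradicting the strict hypothesis; the regularity statement is then the classical CKN endpoint. You simply make explicit the inclusion step $\sum_{k=0}^{K}w_k=\sum_{k\in S}w_k\le\sum_{k\in\Bad}w_k$ and the scaling bookkeeping, both of which the paper leaves implicit.
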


\begin{proof}
If every positive-weight scale were CKN-bad, then
\cref{thm:weighted-counting} would imply
\[
  \sum_{k=0}^{K}w_k
  \le
  \frac{1}{\varepsilon_{\mathrm{close}}(M)}
  \sum_{k=0}^{K}w_k
  \left(
    C_{3,k}
    +
    \mathcal L_k^{\ann}
    +
    \mathcal P_k^{\tail}
    +
    \mathcal R_k^{\PFE}
  \right),
\]
contradicting the strict inequality.  Therefore some positive-weight scale is
CKN-small.  The final sentence is exactly the classical CKN endpoint.
\end{proof}

\begin{remark}[Finite-chain criterion only]
Corollary~\ref{cor:finite-chain-criterion} is not a global regularity theorem.
It says
that sufficiently small total finite-chain cost forces at least one CKN-small
scale inside the considered finite chain.
\end{remark}

\section{Canonical Detector Realization}
\label{sec:canonical-detector}

This section gives a canonical detector realization of the finite-chain counting philosophy in the same normalized geometry.  It is logically separate from the standard-PDE theorem in \cref{sec:counting}.  The standard-PDE theorem closes through the vertical component channel.  The detector theorem below instead builds an amended finite-window detector whose coordinates are explicitly paid by standard pressure, flux, energy, and finite-dimensional residual costs.

\subsection{Detector-level pressure and leakage costs}

For the detector realization, fix a CKN testing radius
\[
  0<\rho\le \lambda<3/4.
\]
The active pressure and harmonic pressure at scale \(k\) are
\[
  F^{\act}_{k,ij}:=\eta u_{k,i}u_{k,j},
  \qquad
  P^{\act}_k:=R_iR_j(F^{\act}_{k,ij}),
  \qquad
  P^{\harm}_k:=p_k-P^{\act}_k,
\]
where the Riesz transforms are applied after zero extension.  On the region where \(\eta=1\), the term \(P^{\harm}_k\) is harmonic in space, up to the usual time-dependent pressure gauge.  Set
\[
  P^{\act,0}_k:=P^{\act}_k-(P^{\act}_k)_{B_\rho}(s),
  \qquad
  P^{\harm,0}_k:=P^{\harm}_k-(P^{\harm}_k)_{B_\rho}(s).
\]
Let \(\Pi^{\prs}_{N,k}\) and \(\Pi^{\harm}_{H,k}\) be bounded finite-rank projections on \(L^{3/2}(Q_\rho)\).  Here \(N\) is an active-pressure rank parameter and \(H\) is a harmonic-pressure rank parameter; the letter \(M\) remains reserved for the full critical bound \(\Psi(1)\le M\).

\begin{definition}[Detector pressure costs]
\label{def:det-pressure-costs}
Define
\[
  P_k^{\comm}:=\norm{[\eta,R_iR_j](u_{k,i}u_{k,j})}_{L^{3/2}(Q_\rho)}^{3/2},
\]
\[
  P_k^{\trunc}:=\norm{(I-\Pi^{\prs}_{N,k})P^{\act,0}_k}_{L^{3/2}(Q_\rho)}^{3/2},
  \qquad
  P_k^{\harm\text{-}\tail}:=\norm{(I-\Pi^{\harm}_{H,k})P^{\harm,0}_k}_{L^{3/2}(Q_\rho)}^{3/2}.
\]
Set
\[
  P_k^{\tail}:=P_k^{\comm}+P_k^{\trunc}+P_k^{\harm\text{-}\tail}.
\]
The retained low pressure-mode observation is
\[
  P_k^{\low}:=\norm{\Pi^{\prs}_{N,k}P^{\act,0}_k}_{L^{3/2}(Q_\rho)}^{3/2}
  +\norm{\Pi^{\harm}_{H,k}P^{\harm,0}_k}_{L^{3/2}(Q_\rho)}^{3/2}.
\]
The amended pressure cost is
\[
  P_k^{\tail,+}:=P_k^{\tail}+P_k^{\low}.
\]
\end{definition}

\begin{definition}[Detector energy--flux costs]
\label{def:det-energy-flux-costs}
Set
\[
  A_k^{\core}:=\esssup_{-\lambda^2<s<0}\int_{\lambda B_1}|u_k(y,s)|^2\,dy,
  \qquad
  E_k^{\core}:=\int_{-\lambda^2}^{0}\int_{\lambda B_1}|\nabla u_k|^2.
\]
Define the detector energy--flux leakage by
\[
  L_k^{\EF}:=
  \int_{-\lambda^2}^{0}\int_{\lambda B_1}(|u_k|^2+|\nabla u_k|^2)
  +
  \int_{-\lambda^2}^{0}\int_{\lambda A_\chi}(|u_k|^3+|p_k||u_k|),
\]
and the amended energy--flux cost by
\[
  L_k^{\AEF}:=A_k^{\core}+L_k^{\EF}.
\]
\end{definition}

\begin{definition}[Finite-dimensional detector residual]
\label{def:det-pfe-residual}
The detector residual is a finite sum of named non-CKN-core residuals,
\[
  R_k^{\PFE}:=
  \Delta^{\flux}_{k,N}+\Delta^{\en}_{k,N}+\Delta^{\src}_{k,N,H}
  +\Delta^{\detc}_{k,N,H}+\Delta^{\chart}_{k,N,H}
  +\Delta^{\gate}_{k,N}+\Delta^{\rep}_{k,N}+\Delta^{\res}_{k,N}
  +\Delta^{\PFE,\trunc}_{k,N,H}.
\]
No summand is allowed to contain the full CKN core quantity \(\Phi_k(\rho)\).  We write \(R_k^{\PFE,+}=R_k^{\PFE}\) unless an amended finite-dimensional residual is explicitly introduced.
\end{definition}

\subsection{Relation to an abstract local detector}

The abstract local detector used in earlier finite-window audit formulations is not identified here with the canonical detector.  The available abstract forms have the schematic shape
\[
  M^{\loc}_{\Lambda}(D)=\norm{O^{\loc}_{\Lambda}D}+\sum_a\alpha_a\Tax^{\loc}_a(D),
  \qquad
  M^{\loc}_{\Lambda,k}(D)=\norm{T^{\loc}_{\Lambda,k}D}_{Z^{\loc}_{\Lambda,k}}.
\]
These formulas do not provide standard-PDE component definitions for the local pressure, harmonic, flux, energy, gate, observation, and residual channels.  Therefore the canonical detector below should be read as a new detector model, not as a reconstruction of the abstract detector.

\begin{theorem}[Abstract-detector comparison not assumed]
\label{thm:det-old-obstruction}
The finite-chain detector theorem below does not require, and does not prove, an identity between the canonical detector and the abstract local detector \(M^{\loc}_{\Lambda,k}\).  It proves a standard-cost realization for a newly defined amended canonical detector.
\end{theorem}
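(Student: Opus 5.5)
The statement is a logical-independence claim, not an analytic estimate, so the plan is to verify it by auditing the dependency structure of the detector theorem that follows. The guiding principle is that a theorem ``does not require'' an identity exactly when no step of its proof invokes it. The theorem ``does not prove'' the identity when the identity is not among its conclusions and could not be extracted from them without extra input.

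The first step is to fix the objects being compared. These are the amended canonical detector, assembled from the explicitly defined coordinates $P_k^{\tail,+}$, $L_k^{\AEF}$, $R_k^{\PFE,+}$ of \cref{def:det-pressure-costs,def:det-energy-flux-costs,def:det-pfe-residual}, and the abstract forms $M^{\loc}_{\Lambda,k}(D)=\norm{T^{\loc}_{\Lambda,k}D}_{Z^{\loc}_{\Lambda,k}}$. As noted in the preceding paragraph, the operators $T^{\loc}_{\Lambda,k}$, $O^{\loc}_{\Lambda}$, $\Tax^{\loc}_a$ and the spaces $Z^{\loc}_{\Lambda,k}$ are given no standard-PDE definitions in terms of $(u_k,p_k)$. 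Any identity between the two detectors is therefore not even a well-posed proposition inside the present framework.

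The second step is to trace the proof chain of the detector realization, in the order it will be carried out:
\begin{itemize}[nosep]
\item upper realization, bounding the canonical detector by the standard costs;
\item lower audit;
\item CKN extraction, meaning small detector implies $\Phi_k(\rho)<\varepsilon_{\CKN}$;
\item finite-chain counting, obtained by the same contraposition-and-summation argument as \cref{thm:weighted-counting}.
\end{itemize}
For each step I will check that the only inputs are the canonical definitions, the commutator bound of \cref{prop:commutator-bound} (adapted to radius $\rho$), the Classical Inputs, and nonnegativity of the costs. None of these mentions $M^{\loc}_{\Lambda,k}$. This establishes ``does not require.''

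The third step addresses ``does not prove.'' All conclusions of the detector theorem are inequalities whose two sides involve only canonical coordinates, standard costs, and $\Phi_k$. No conclusion has $M^{\loc}_{\Lambda,k}$ on either side, so no identity involving it is asserted.

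The main obstacle is that this argument is meta-mathematical, and its rigor depends on the later proofs actually avoiding the abstract detector. If any residual coordinate, for instance $\Delta^{\detc}_{k,N,H}$ (``detector-comparison residual''), were defined through $M^{\loc}_{\Lambda,k}$, that would contradict the claim. The check here is to confirm that $\Delta^{\detc}$ is defined purely as a finite-dimensional canonical residual. If it is not, the fallback is to replace it by a nonnegative canonical surrogate, which the ledger permits because only nonnegativity is used.
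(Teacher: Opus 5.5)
Your proposal is correct and follows the paper's own argument. The abstract forms \(M^{\loc}_{\Lambda,k}\) come with no component-level standard-PDE formulas, so no realization of them by the standard costs is available and the canonical detector is defined and analyzed directly; your dependency audit only makes explicit what the paper leaves implicit. One correction to that audit: the detector chain is logically separate from \cref{sec:counting} and uses neither the Classical Inputs nor \cref{prop:commutator-bound}. It closes instead through \cref{lem:det-velocity-interp,lem:det-pressure-control} together with the lower audit and upper realization, so \(R_k^{\PFE}\) must match the detector's residual channels rather than merely be nonnegative. None of these ingredients mentions \(M^{\loc}_{\Lambda,k}\), so your conclusion is unaffected.
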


\begin{proof}
The abstract detector forms specify an observation norm and tax channels but do not identify those channels with the standard-PDE costs in \cref{def:det-pressure-costs,def:det-energy-flux-costs,def:det-pfe-residual}.  Without component-level formulas, an upper realization of \(M^{\loc}_{\Lambda,k}\) by these costs is not a theorem.  The canonical detector is therefore defined and analyzed directly.
\end{proof}

\subsection{Canonical package and unamended detector}

At each scale, fix bounded finite-rank selectors
\[
  \Pi^U_{N,k}:L^2(Q_\lambda)^3\to U_{N,k},
  \qquad
  \Pi^F_{N,k}:L^{3/2}(Q_1)^{3\times3}\to F_{N,k},
\]
together with \(\Pi^{\prs}_{N,k}\) and \(\Pi^{\harm}_{H,k}\).  Also fix finite families of test functions
\[
  \{\phi_\ell^{\flux}\}_{\ell=1}^{n_{\flux}}\subset L^\infty(\lambda A_\chi\times(-\lambda^2,0)),
  \qquad
  \{\phi_\ell^{\en}\}_{\ell=1}^{n_{\en}}\subset L^\infty(Q_\lambda),
\]
with norms at most one.

\begin{definition}[Canonical local PFE package]
\label{def:det-canonical-package}
The canonical package is
\[
D^{\can}_{k,N,H}:=(U_{k,N},P^{\act}_{k,N},P^{\harm}_{k,H},F^{\cl}_{k,N},F^{\flux}_{k,N},E^{\en}_{k,N},G_{k,N},O_{k,N},\operatorname{Rep}_{k,N}),
\]
where
\[
  U_{k,N}:=\Pi^U_{N,k}(u_k|_{Q_\lambda}),
  \quad
  P^{\act}_{k,N}:=\Pi^{\prs}_{N,k}(P^{\act,0}_k),
  \quad
  P^{\harm}_{k,H}:=\Pi^{\harm}_{H,k}(P^{\harm,0}_k),
\]
\[
  F^{\cl}_{k,N}:=\Pi^F_{N,k}(\eta u_k\otimes u_k),
\]
\[
  F^{\flux}_{k,N}:=
  \left(\int_{\lambda A_\chi\times(-\lambda^2,0)}\phi_\ell^{\flux}|u_k|^3,
  \int_{\lambda A_\chi\times(-\lambda^2,0)}\phi_\ell^{\flux}|p_k||u_k|\right)_{\ell=1}^{n_{\flux}},
\]
\[
  E^{\en}_{k,N}:=
  \left(\int_{Q_\lambda}\phi_\ell^{\en}|u_k|^2,
  \int_{Q_\lambda}\phi_\ell^{\en}|\nabla u_k|^2\right)_{\ell=1}^{n_{\en}}.
\]
The gate, observation, and reproduction coordinates are finite-dimensional coordinates paid by the named residual terms in \cref{def:det-pfe-residual}.
\end{definition}

\begin{definition}[Canonical unamended detector]
\label{def:det-unamended}
The canonical unamended detector is
\begin{align*}
  M^{\can}_{k,N,H}(D_k)
  &:=\beta_{\obs}\Obs^{\can}_k
  +\beta_{\prs}\Tax^{\can}_{\prs,k}
  +\beta_{\harm}\Tax^{\can}_{\harm,k}
  +\beta_{\flux}\Tax^{\can}_{\flux,k} \\
  &\quad +\beta_{\en}\Tax^{\can}_{\en,k}
  +\beta_{\gate}\Tax^{\can}_{\gate,k}
  +\beta_{\rep}\Rep^{\can}_k
  +\beta_{\res}\Res^{\can}_k,
\end{align*}
where the pressure, harmonic, flux, energy, gate, reproduction, and residual channels are paid by \(P_k^{\tail}\), \(L_k^{\EF}\), and \(R_k^{\PFE}\).  The observation channel is allowed only to observe the paid coordinates and satisfies
\[
\Obs^{\can}_k
\le C_{\obs,k}(\Tax^{\can}_{\prs,k}+\Tax^{\can}_{\harm,k}+\Tax^{\can}_{\flux,k}+\Tax^{\can}_{\en,k}+\Tax^{\can}_{\gate,k}+\Rep^{\can}_k+\Res^{\can}_k).
\]
\end{definition}

\begin{theorem}[Upper realization for the unamended detector]
\label{thm:det-unamended-upper}
There exists a finite-window constant \(C^{\can,\mathrm{up}}_{k,N,H}\) such that
\[
  M^{\can}_{k,N,H}(D_k)
  \le C^{\can,\mathrm{up}}_{k,N,H}(L_k^{\EF}+P_k^{\tail}+R_k^{\PFE}).
\]
\end{theorem}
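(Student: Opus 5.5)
The bound follows essentially from the definition of the canonical unamended detector in \cref{def:det-unamended}, so the proof is bookkeeping. I would first make the phrase ``paid by'' precise as a channel-by-channel inequality. For each tax, reproduction and residual channel there should be a finite-window constant bounding it by one of the standard costs:
\[
\Tax^{\can}_{\prs,k}\le c_{\prs}P_k^{\tail},
\qquad
\Tax^{\can}_{\harm,k}\le c_{\harm}P_k^{\tail},
\qquad
\Tax^{\can}_{\flux,k}+\Tax^{\can}_{\en,k}\le c_{\EF}L_k^{\EF},
\]
\[
\Tax^{\can}_{\gate,k}+\Rep^{\can}_k+\Res^{\can}_k\le c_{R}R_k^{\PFE}.
\]
Where these are not simply definitional, I would check them directly from the package in \cref{def:det-canonical-package}. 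For the pressure channel, the tail pieces $P_k^{\comm},P_k^{\trunc},P_k^{\harm\text{-}\tail}$ are exactly the summands of $P_k^{\tail}$. For the flux and energy coordinates, the test functions $\phi^{\flux}_\ell$ and $\phi^{\en}_\ell$ have $L^\infty$ norm at most one, so each coordinate is bounded in absolute value by the corresponding integral of $|u_k|^3$, $|p_k||u_k|$, $|u_k|^2$ or $|\nabla u_k|^2$ over $\lambda A_\chi\times(-\lambda^2,0)$ or $Q_\lambda$. Both integrals appear in $L_k^{\EF}$. Summing over the finitely many indices $\ell$ gives $c_{\EF}\le n_{\flux}+n_{\en}$, up to the norm chosen on the coordinate vector. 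The gate, reproduction and residual coordinates are by definition paid by the named summands $\Delta^{\gate},\Delta^{\rep},\Delta^{\res},\dots$ of $R_k^{\PFE}$.

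Next I would handle the observation channel through its defining inequality. Substituting the channel bounds gives
\[
\Obs^{\can}_k\le C_{\obs,k}\,c_*\,(L_k^{\EF}+P_k^{\tail}+R_k^{\PFE}),
\qquad
c_*:=\max(2c_{\prs},2c_{\harm},c_{\EF},c_R).
\]
Finally, I would multiply each channel bound by its nonnegative weight $\beta$ and add them. This yields the stated estimate with
\[
C^{\can,\mathrm{up}}_{k,N,H}:=c_*\Bigl(\beta_{\obs}C_{\obs,k}+\beta_{\prs}+\beta_{\harm}+\beta_{\flux}+\beta_{\en}+\beta_{\gate}+\beta_{\rep}+\beta_{\res}\Bigr).
\]
This constant depends only on the finite-window data: $k$, the ranks $N,H$, the fixed selectors and test families, and the weights. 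It is independent of the solution.

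The main obstacle is that ``paid by'' is only schematic in \cref{def:det-unamended}. The work is to fix the channel norms so that each payment inequality is honest. In particular the pressure and harmonic taxes must measure only the tail parts $(I-\Pi)P^{\cdot,0}_k$ and the commutator, not the retained low modes $\Pi^{\prs}_{N,k}P^{\act,0}_k$ and $\Pi^{\harm}_{H,k}P^{\harm,0}_k$. Those low modes are charged only by $P_k^{\low}$, which is absent from this unamended bound. This is why the observation channel may observe only paid coordinates, and why the amended cost $P_k^{\tail,+}$ is needed later. I would state this convention explicitly and verify that no channel uses the low modes or the CKN core.
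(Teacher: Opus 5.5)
Your proposal is correct and follows essentially the same route as the paper: the pressure and harmonic channels are identified with the commutator, truncation, and harmonic-tail pieces of \(P_k^{\tail}\), the flux and energy coordinates are bounded by the corresponding integrals in \(L_k^{\EF}\), the gate, reproduction, and residual channels are charged to \(R_k^{\PFE}\), and the observation channel is handled through its domination inequality with the weights absorbed into the constant. Your version only makes the bookkeeping explicit (using \(\|\phi_\ell\|_{L^\infty}\le 1\), the finite index counts, and an explicit formula for \(C^{\can,\mathrm{up}}_{k,N,H}\)), which the paper leaves implicit.
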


\begin{proof}
The pressure and harmonic detector channels are exactly the commutator, active-pressure truncation, and harmonic-tail components of \(P_k^{\tail}\).  Flux observations are bounded by the annular cubic and pressure-flux terms in \(L_k^{\EF}\).  Energy observations are bounded by the local \(L^2\) and dissipation terms included in \(L_k^{\EF}\).  Gate, reproduction, detector, and residual channels are included in \(R_k^{\PFE}\).  The observation domination inequality and the finite number of positive weights are absorbed into the constant.
\end{proof}

\begin{proposition}[Why the unamended detector does not extract CKN badness]
\label{prop:det-unamended-fails}
The unamended detector alone does not give a proved implication
\[
  \Phi_k(\rho)\ge\varepsilon_{\CKN}\quad\Longrightarrow\quad
  M^{\can}_{k,N,H}(D_k)\ge c>0.
\]
The missing controls are an \(L_t^\infty L_x^2\) velocity channel and retained low active and harmonic pressure-mode observations.
\end{proposition}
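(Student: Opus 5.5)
The statement is a non-implication: the unamended detector, as defined, does not yield a lower bound of the form $\Phi_k(\rho)\ge\varepsilon_{\CKN}\Rightarrow M^{\can}_{k,N,H}(D_k)\ge c$. I would prove it by showing that the detector's information is insufficient. Concretely, I would exhibit (or argue for) a family of admissible normalized data on which $M^{\can}_{k,N,H}$ can be made arbitrarily small while $\Phi_k(\rho)$ stays bounded below. The plan is to exploit exactly the two gaps named in the statement.

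\textbf{Step 1 (detector is controlled by the paid costs).} By \cref{thm:det-unamended-upper}, $M^{\can}_{k,N,H}(D_k)\le C^{\can,\mathrm{up}}_{k,N,H}(L_k^{\EF}+P_k^{\tail}+R_k^{\PFE})$. So it suffices to show that $L_k^{\EF}+P_k^{\tail}+R_k^{\PFE}$ can be small while $\Phi_k(\rho)\ge\varepsilon_{\CKN}$.

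\textbf{Step 2 (the pressure gap).} $P_k^{\tail}$ measures only the commutator, the high-mode part $(I-\Pi^{\prs}_{N,k})P^{\act,0}_k$, and the harmonic tail $(I-\Pi^{\harm}_{H,k})P^{\harm,0}_k$. The retained modes $\Pi^{\prs}_{N,k}P^{\act,0}_k$ and $\Pi^{\harm}_{H,k}P^{\harm,0}_k$ are unobserved. If the mean-free pressure lies essentially in these finite-rank ranges, the pressure contribution $\rho^{-2}\int\!\int|p_k-(p_k)_{B_\rho}|^{3/2}$ to $\Phi_k(\rho)$ can be of order one while $P_k^{\tail}$ is small. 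A convenient choice is a spatially harmonic mode $h(y)g(s)$, say a nonconstant harmonic polynomial, placed in the range of $\Pi^{\harm}_{H,k}$. Such a term adds nothing to the harmonic tail, and its oscillation on $B_\rho$ is bounded below.

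\textbf{Step 3 (the velocity gap).} $L_k^{\EF}$ contains $\int\!\int_{\lambda B_1}|u_k|^2$ and the dissipation, but no $L^\infty_tL^2_x$ control. Velocity mass concentrated in a short time window near a single time therefore has small space-time $L^2$ norm. The same concentration keeps $\rho^{-2}\int_{Q_\rho}|u_k|^3$ non-negligible, since the cubic norm scales differently from the quadratic one. The flux test functions live on $\lambda A_\chi$ and can be avoided by supporting the concentration in the core. The dissipation is the delicate term: a shrinking temporal window helps, while spatial roughness hurts. I would balance these by choosing moderate spatial scales, or rely on Step 2 alone, which already suffices for the non-implication. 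Either gap breaks any compactness argument in the spirit of \cref{thm:one-component}, because the would-be limit is not forced into a CKN-clean class.

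\textbf{Main obstacle.} The hardest point is making the counterexample a genuine suitable weak solution rather than arbitrary admissible data. The residuals $R_k^{\PFE}$ are only named, with no formula, so the statement must be read at the level of ``no proved implication''. I would therefore argue structurally: nothing in \cref{def:det-pressure-costs,def:det-energy-flux-costs,def:det-pfe-residual} constrains $\sup_s\int|u_k|^2$ or the retained pressure modes. Consequently, any proof of the implication would require compactness or pressure control that the unamended detector does not supply. I would supplement this with the harmonic-pressure example from Step 2 as a concrete witness, for instance a pressure gradient $\nabla(h(y)g(s))$ driving a spatially uniform flow.
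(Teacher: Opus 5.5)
\textbf{There is a genuine gap: the concrete counterexample route you lead with cannot work for genuine solutions.} Your structural fallback does match the paper's proof. That fallback says $L_k^{\EF}$ and $P_k^{\tail}$ contain no $L^\infty_tL^2_x$ coordinate and do not see the retained modes $\Pi^{\prs}_{N,k}P^{\act,0}_k$ and $\Pi^{\harm}_{H,k}P^{\harm,0}_k$. So the route of \cref{lem:det-velocity-interp,lem:det-pressure-control} cannot run without $A_k^{\core}$ and $P_k^{\low}$. The paper argues only at this level of a missing proof route and builds no counterexample.

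What fails is the route you keep as a witness: genuine suitable weak solutions with $L_k^{\EF}+P_k^{\tail}+R_k^{\PFE}$ small and $\Phi_k(\rho)\ge\varepsilon_{\CKN}$. Steps~2 and~3 ignore the Navier--Stokes equation and the local energy inequality. These are exactly the kind of ``additional structural theorem'' the paper's proof sets aside, and they close both gaps at the level of costs.
\begin{itemize}
\item \emph{Pressure.} If $L_k^{\EF}\to0$ along solutions, then $u_k\to0$ in $L^2(Q_\lambda)$. Hence $\nabla p_k=\Delta u_k-\partial_su_k-\operatorname{div}(u_k\otimes u_k)\to0$ in $\mathcal{D}'(Q_\lambda)$. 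If also $P_k^{\trunc}+P_k^{\harm\text{-}\tail}\to0$, the mean-free pressure on $Q_\rho$ is an $L^{3/2}$-small error plus an element of a fixed finite-dimensional space. On that space $w\mapsto\nabla_yw$ is injective once spatial means are removed. Since all Hausdorff vector topologies on a finite-dimensional space agree, the pressure term of $\Phi_k(\rho)$ tends to $0$. So retained modes can never carry the badness while the other costs are small, and ``Step~2 alone suffices'' is false for every $\rho\le\lambda$.
\item \emph{Velocity.} $L_k^{\EF}$ contains every term on the right of the local energy inequality tested with $\chi(y/\lambda)\theta(s)$, where $\theta=0$ for $s\le-\lambda^2$ and $\theta=1$ for $s\ge-\lambda^2/2$. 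This gives $\esssup_{-\lambda^2/2<s<0}\int_{B_{3\lambda/4}}|u_k|^2\le C_\lambda L_k^{\EF}$. Interpolation then yields $\rho^{-2}\int_{Q_\rho}|u_k|^3\le C_{\rho,\lambda}(L_k^{\EF})^{3/2}$ for $\rho\le\lambda/\sqrt{2}$, which excludes the time concentration of Step~3 there.
\end{itemize}
Your claim that either gap ``breaks any compactness argument'' is therefore wrong. For $\rho\le\lambda/\sqrt{2}$ and fixed projections, a contradiction argument shows that small $L_k^{\EF}+P_k^{\tail}$ forces $\Phi_k(\rho)<\varepsilon_{\CKN}$. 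So your Step~1 reduction cannot be fulfilled in that regime.

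Your uniform-flow witness fails on its own terms as well. A flow $u=U(s)$ forces $\nabla p=-U'(s)$, so $h$ must be affine and $p=-U'(s)\cdot y$ modulo functions of $s$. Keeping $\int_{Q_\lambda}|U|^2$ small while $\norm{U'}_{L^{3/2}}$ stays of order one forces $U'\rightharpoonup0$. The fixed finite-rank, hence compact, projection $\Pi^{\harm}_{H,k}$ then maps this pressure to $o(1)$, so the whole oscillation lands in $P_k^{\harm\text{-}\tail}$.

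You should drop the counterexample claims and keep the proposition where the paper keeps it. No lower audit bound for $M^{\can}_{k,N,H}$ is available: its channels are only paid for, i.e.\ bounded above, by the costs. Without structural PDE input, the listed definitions yield no proof of the implication.
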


\begin{proof}
The standard interpolation bound for \(\int |u|^3\) uses an \(L_t^\infty L_x^2\) term together with dissipation.  Time-integrated \(L^2\) and dissipation channels alone do not exclude the relevant time concentration.  For the pressure, a mean-free pressure oscillation can lie in the retained finite-dimensional ranges of \(\Pi^{\prs}_{N,k}\) or \(\Pi^{\harm}_{H,k}\), producing zero high-tail truncation while remaining visible to the CKN pressure term.  CKN extraction therefore requires the amended coordinates below, unless an additional structural theorem excludes these directions.
\end{proof}

\subsection{Amended detector and CKN extraction}

\begin{definition}[Amended canonical detector]
\label{def:det-amended}
Let \(\beta_A,\beta_E,\beta_{\low}>0\).  Define
\[
  M^{\can+}_{k,N,H}(D_k):=M^{\can}_{k,N,H}(D_k)+\beta_AA_k^{\core}+\beta_EE_k^{\core}+\beta_{\low}P_k^{\low}.
\]
The amended detector distance is
\[
  \delta^{\can+}_k:=A_k^{\core}+E_k^{\core}+P_k^{\tail,+}+R_k^{\PFE,+}.
\]
\end{definition}

\begin{theorem}[Upper realization for the amended detector]
\label{thm:det-amended-upper}
There exists a finite-window constant \(C^{\can+,\mathrm{up}}_{k,N,H}\) such that
\[
  M^{\can+}_{k,N,H}(D_k)
  \le C^{\can+,\mathrm{up}}_{k,N,H}(L_k^{\AEF}+P_k^{\tail,+}+R_k^{\PFE,+}).
\]
\end{theorem}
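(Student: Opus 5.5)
The plan is to reduce the estimate to \cref{thm:det-unamended-upper} and then bound the three added terms of \cref{def:det-amended} one at a time. Write
\[
  M^{\can+}_{k,N,H}(D_k)=M^{\can}_{k,N,H}(D_k)+\beta_AA_k^{\core}+\beta_EE_k^{\core}+\beta_{\low}P_k^{\low}.
\]
For the first term, \cref{thm:det-unamended-upper} gives the bound
\[
  C^{\can,\mathrm{up}}_{k,N,H}\bigl(L_k^{\EF}+P_k^{\tail}+R_k^{\PFE}\bigr).
\]
Every cost appearing here is nonnegative. Moreover, \cref{def:det-energy-flux-costs,def:det-pressure-costs} and the convention \(R_k^{\PFE,+}=R_k^{\PFE}\) give
\[
  L_k^{\EF}\le L_k^{\AEF},\qquad P_k^{\tail}\le P_k^{\tail,+},\qquad R_k^{\PFE}\le R_k^{\PFE,+}.
\]
So the unamended part is already dominated by \(C^{\can,\mathrm{up}}_{k,N,H}(L_k^{\AEF}+P_k^{\tail,+}+R_k^{\PFE,+})\).

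Next I handle the three added coordinates.

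\begin{itemize}
\item \(A_k^{\core}\): since \(L_k^{\AEF}=A_k^{\core}+L_k^{\EF}\) and \(L_k^{\EF}\ge0\), we have \(A_k^{\core}\le L_k^{\AEF}\).
\item \(E_k^{\core}\): the integral \(\int_{-\lambda^2}^0\int_{\lambda B_1}|\nabla u_k|^2\) is literally one of the summands in the first integral of \(L_k^{\EF}\), so \(E_k^{\core}\le L_k^{\EF}\le L_k^{\AEF}\).
\item \(P_k^{\low}\): by definition \(P_k^{\tail,+}=P_k^{\tail}+P_k^{\low}\) with \(P_k^{\tail}\ge0\), so \(P_k^{\low}\le P_k^{\tail,+}\).
\end{itemize}

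Summing, I will take
\[
  C^{\can+,\mathrm{up}}_{k,N,H}:=C^{\can,\mathrm{up}}_{k,N,H}+\beta_A+\beta_E+\beta_{\low}.
\]
This constant is finite because the weights are fixed positive numbers and the unamended constant is finite. The inequality then follows by adding the four bounds and enlarging each right-hand side to the full sum \(L_k^{\AEF}+P_k^{\tail,+}+R_k^{\PFE,+}\).

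The only point requiring care is measure-theoretic. The \(E_k^{\core}\) comparison requires the integration domain \((-\lambda^2,0)\times\lambda B_1\) in \(E_k^{\core}\) to coincide with the domain of the dissipation term in \(L_k^{\EF}\), and it does by construction. Beyond that there is no real obstacle, since the statement is essentially bookkeeping built on \cref{thm:det-unamended-upper}. Any substantive difficulty belongs to the lower audit and CKN extraction, not to this upper bound.
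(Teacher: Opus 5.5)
Your proposal is correct and is essentially the paper's own proof. It invokes \cref{thm:det-unamended-upper} and then uses nonnegativity of all costs to get \(A_k^{\core}\le L_k^{\AEF}\), \(E_k^{\core}\le L_k^{\EF}\le L_k^{\AEF}\), and \(P_k^{\low}\le P_k^{\tail,+}\); your explicit constant \(C^{\can,\mathrm{up}}_{k,N,H}+\beta_A+\beta_E+\beta_{\low}\) simply makes precise the paper's phrase ``absorb the fixed weights into the constant.''
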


\begin{proof}
Use \cref{thm:det-unamended-upper} for the unamended detector.  The added \(A_k^{\core}\) term is contained in \(L_k^{\AEF}\), the core dissipation term is contained in \(L_k^{\EF}\subset L_k^{\AEF}\), and \(P_k^{\low}\) is contained in \(P_k^{\tail,+}\).  Absorb the fixed weights into the constant.
\end{proof}

\begin{theorem}[Lower audit bound for the amended distance]
\label{thm:det-lower-audit}
There exists \(c^{\can+}_{k,N,H}>0\), depending only on the finite detector weights and norm conventions, such that
\[
  M^{\can+}_{k,N,H}(D_k)\ge c^{\can+}_{k,N,H}\delta^{\can+}_k.
\]
\end{theorem}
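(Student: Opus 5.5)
The lower bound will be obtained by matching each summand of \(\delta^{\can+}_k\) to a channel of \(M^{\can+}_{k,N,H}(D_k)\) that dominates it with a fixed positive weight. The argument is therefore a coordinate-by-coordinate comparison, not a PDE estimate. Write
\[
  \delta^{\can+}_k
  = A_k^{\core}+E_k^{\core}+P_k^{\comm}+P_k^{\trunc}+P_k^{\harm\text{-}\tail}+P_k^{\low}+R_k^{\PFE,+}.
\]
The amended terms come for free: by \cref{def:det-amended},
\[
  M^{\can+}_{k,N,H}(D_k)\ge \beta_AA_k^{\core}+\beta_EE_k^{\core}+\beta_{\low}P_k^{\low}.
\]
Every channel of \(M^{\can}_{k,N,H}\) is nonnegative, and \(\Obs^{\can}_k\ge0\), so discarding the remaining terms is legitimate.

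For the tail and residual pieces I would read \cref{def:det-unamended} as saying that the pressure and harmonic tax channels are given exactly by the tail components. That is, \(\Tax^{\can}_{\prs,k}\ge P_k^{\comm}+P_k^{\trunc}\) and \(\Tax^{\can}_{\harm,k}\ge P_k^{\harm\text{-}\tail}\), up to the fixed norm conventions; this is the same identification invoked in the proof of \cref{thm:det-unamended-upper}, now used in the reverse direction. Likewise I would take the gate, reproduction and residual channels to be the named finite-dimensional residual coordinates of \cref{def:det-pfe-residual}. Then
\[
  \Tax^{\can}_{\gate,k}+\Rep^{\can}_k+\Res^{\can}_k\ge c_0 R_k^{\PFE}=c_0R_k^{\PFE,+},
\]
where \(c_0>0\) is fixed by the norm conventions. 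Any two norms on the finite-dimensional coordinate spaces are equivalent, and this is where the dependence on \(N,H\) enters \(c_0\). Only finitely many summands occur, namely the nine named residuals, so these constants assemble into a single \(c_0\).

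Combining the two steps gives
\[
  M^{\can+}_{k,N,H}(D_k)\ge \min\{\beta_A,\beta_E,\beta_{\low},c_0\beta_{\prs},c_0\beta_{\harm},c_0\beta_{\gate},c_0\beta_{\rep},c_0\beta_{\res}\}\,\delta^{\can+}_k,
\]
and the minimum is the required \(c^{\can+}_{k,N,H}\). It depends only on the weights and the norm conventions, and it is strictly positive provided every listed weight is positive. The flux and energy channels, which are controlled by \(L_k^{\EF}\), are not needed here because \(\delta^{\can+}_k\) contains no flux term; the only energy content, \(A_k^{\core}\) and \(E_k^{\core}\), is carried by the amended terms.

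The main obstacle is the identification step. \Cref{def:det-unamended} states that the channels are \emph{paid by} the costs, which is an upper bound, whereas the lower audit needs each channel to \emph{dominate} its assigned cost. I would make this explicit by fixing the channel definitions as equalities,
\[
  \Tax^{\can}_{\prs,k}:=P_k^{\comm}+P_k^{\trunc},\qquad \Tax^{\can}_{\harm,k}:=P_k^{\harm\text{-}\tail},
\]
and \(\Res^{\can}_k\) (together with gate and reproduction) equal to the corresponding residual summands. I would also require the weights \(\beta_{\prs},\beta_{\harm},\beta_{\gate},\beta_{\rep},\beta_{\res}>0\). Under that canonical choice, the two-sided comparison holds with constants equal to one, and the lower bound follows directly.
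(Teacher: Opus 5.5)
Your proposal is correct and follows essentially the same route as the paper. The paper's proof simply asserts that each summand of \(\delta^{\can+}_k\) appears in \(M^{\can+}_{k,N,H}(D_k)\) with a positive weight (or as a finite sum of positively weighted components), and then obtains the constant from positivity of the weights and finite-dimensional norm equivalence. Your observation that the ``paid by'' clause in \cref{def:det-unamended} gives only an upper bound is the key point the paper leaves implicit. Its proof tacitly assumes what you state outright: the pressure, harmonic, gate, reproduction and residual channels are fixed as equalities with the corresponding cost components, and \(\beta_{\prs},\beta_{\harm},\beta_{\gate},\beta_{\rep},\beta_{\res}>0\).
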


\begin{proof}
Each summand in \(\delta^{\can+}_k\) appears in the amended detector with positive weight or is a finite sum of detector components appearing with positive weights.  Finite-dimensional norm equivalence and positivity of the detector weights give the claimed constant.
\end{proof}

\begin{lemma}[Velocity interpolation from amended energy]
\label{lem:det-velocity-interp}
There is \(C_{\rho,\lambda}<\infty\) such that
\[
  \rho^{-2}\int_{Q_\rho}|u_k|^3
  \le C_{\rho,\lambda}\bigl((A_k^{\core})^{3/4}(E_k^{\core})^{3/4}+(A_k^{\core})^{3/2}\bigr).
\]
In particular, if \(A_k^{\core}+E_k^{\core}\le \tau\), then
\[
  \rho^{-2}\int_{Q_\rho}|u_k|^3\le C'_{\rho,\lambda}\tau^{3/2}.
\]
\end{lemma}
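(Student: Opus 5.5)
The plan is the classical Gagliardo--Nirenberg / Sobolev interpolation on the ball \(B_\lambda=\lambda B_1\), applied for almost every time and then integrated in time. Since \(\rho\le\lambda\), we have \(Q_\rho\subset Q_\lambda=B_\lambda\times(-\lambda^2,0)\). It therefore suffices to bound \(\int_{Q_\lambda}|u_k|^3\) and absorb the factor \(\rho^{-2}\) into \(C_{\rho,\lambda}\).

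The first step is the pointwise-in-time estimate. For a.e.\ \(s\in(-\lambda^2,0)\) with \(u_k(\cdot,s)\in H^1(B_\lambda)\), Hölder gives \(\|u\|_{L^3}^3\le\|u\|_{L^2}^{3/2}\|u\|_{L^6}^{3/2}\). The Sobolev inequality on the Lipschitz domain \(B_\lambda\), with the lower-order term kept because \(u_k\) has no boundary vanishing, reads
\[
\|u\|_{L^6(B_\lambda)}\le C_\lambda\bigl(\|\nabla u\|_{L^2(B_\lambda)}+\|u\|_{L^2(B_\lambda)}\bigr).
\]
Combining these yields
\[
\int_{B_\lambda}|u_k(\cdot,s)|^3\le C_\lambda\Bigl(\|u_k(s)\|_{L^2}^{3/2}\|\nabla u_k(s)\|_{L^2}^{3/2}+\|u_k(s)\|_{L^2}^{3}\Bigr).
\]

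The second step is the time integration. Bound \(\|u_k(s)\|_{L^2}^2\le A_k^{\core}\) by the ess\,sup definition. The first term then becomes \((A_k^{\core})^{3/4}\int_{-\lambda^2}^0\|\nabla u_k(s)\|_{L^2}^{3/2}\,ds\), and Hölder in time with exponents \(4/3\) and \(4\) gives \(\int\|\nabla u_k\|_{L^2}^{3/2}\,ds\le\lambda^{1/2}(E_k^{\core})^{3/4}\). The second term integrates to at most \(\lambda^2(A_k^{\core})^{3/2}\). Multiplying by \(\rho^{-2}\) gives the first displayed inequality with \(C_{\rho,\lambda}=C_\lambda\rho^{-2}\max(\lambda^{1/2},\lambda^2)\).

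For the ``in particular'' clause, take \(A_k^{\core},E_k^{\core}\le\tau\). Then \((A_k^{\core})^{3/4}(E_k^{\core})^{3/4}\le\tau^{3/2}\) and \((A_k^{\core})^{3/2}\le\tau^{3/2}\), so we may take \(C'_{\rho,\lambda}=2C_{\rho,\lambda}\).

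The only point needing care is the Sobolev step. There is no cutoff and no boundary condition, so the inhomogeneous \(H^1(B_\lambda)\to L^6\) embedding must be used rather than the homogeneous one. This is precisely what produces the lower-order \((A_k^{\core})^{3/2}\) term. The embedding constant depends only on \(\lambda\), by scaling from \(B_1\). We also need the suitable-weak-solution regularity \(u_k\in L^\infty_tL^2_x\cap L^2_tH^1_x\) on \(Q_\lambda\), so that the slice estimate holds for a.e.\ \(s\); this regularity is part of the definition of a suitable weak solution.
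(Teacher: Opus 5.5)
Your proposal is correct and follows the paper's route: the paper's proof is a one-line appeal to $Q_\rho\subset Q_\lambda$, Sobolev interpolation on $\lambda B_1$, integration in time and H\"older's inequality. You fill in the details it leaves implicit, namely the inhomogeneous $H^1(B_\lambda)\hookrightarrow L^6$ embedding that produces the $(A_k^{\core})^{3/2}$ term and the time-H\"older step with exponents $4/3$ and $4$.
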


\begin{proof}
Since \(Q_\rho\subset Q_\lambda\), the Sobolev interpolation inequality on \(\lambda B_1\), followed by integration in time and Holder's inequality, gives the displayed estimate.
\end{proof}

\begin{lemma}[Pressure oscillation from low modes and tails]
\label{lem:det-pressure-control}
Assume \(\Pi^{\prs}_{N,k}\) and \(\Pi^{\harm}_{H,k}\) are bounded on \(L^{3/2}(Q_\rho)\).  Then
\[
  \rho^{-2}\int_{-\rho^2}^{0}\int_{B_\rho}|p_k-(p_k)_{B_\rho}(s)|^{3/2}
  \le C_{\rho,N,H}P_k^{\tail,+}.
\]
\end{lemma}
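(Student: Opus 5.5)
The plan is to decompose the pressure on \(Q_\rho\) into the active and harmonic pieces already introduced in \cref{def:det-pressure-costs}, and then to split each piece into its retained finite-rank part and its tail. On \(B_\rho\) we have \(p_k=P^{\act}_k+P^{\harm}_k\), and subtracting the spatial mean over \(B_\rho\) is linear. Therefore
\[
  p_k-(p_k)_{B_\rho}(s)=P^{\act,0}_k+P^{\harm,0}_k .
\]
Next I write \(P^{\act,0}_k=\Pi^{\prs}_{N,k}P^{\act,0}_k+(I-\Pi^{\prs}_{N,k})P^{\act,0}_k\), and similarly for the harmonic part with \(\Pi^{\harm}_{H,k}\). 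This expresses the mean-free pressure as a sum of four functions in \(L^{3/2}(Q_\rho)\).

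The estimate then follows from the elementary convexity bound \(|a_1+\dots+a_4|^{3/2}\le 4^{1/2}\sum_i|a_i|^{3/2}\), applied pointwise and integrated over \(Q_\rho\). This gives
\[
  \int_{Q_\rho}|p_k-(p_k)_{B_\rho}|^{3/2}
  \le 2\Bigl(\norm{\Pi^{\prs}_{N,k}P^{\act,0}_k}^{3/2}+\norm{(I-\Pi^{\prs}_{N,k})P^{\act,0}_k}^{3/2}
  +\norm{\Pi^{\harm}_{H,k}P^{\harm,0}_k}^{3/2}+\norm{(I-\Pi^{\harm}_{H,k})P^{\harm,0}_k}^{3/2}\Bigr),
\]
with all norms taken in \(L^{3/2}(Q_\rho)\). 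By \cref{def:det-pressure-costs}:
\begin{itemize}[nosep]
  \item the two retained-mode norms sum to \(P_k^{\low}\);
  \item the active tail norm is \(P_k^{\trunc}\);
  \item the harmonic tail norm is \(P_k^{\harm\text{-}\tail}\).
\end{itemize}
All of these are bounded by \(P_k^{\tail,+}\), since \(P_k^{\comm}\ge0\) only enlarges the right-hand side. Multiplying by \(\rho^{-2}\) gives the claim with \(C_{\rho,N,H}=2\rho^{-2}\).

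Boundedness of the projections on \(L^{3/2}(Q_\rho)\) is used only to ensure that each piece is a well-defined element of \(L^{3/2}(Q_\rho)\). This is also why the constant is allowed to depend on \(N\) and \(H\), although the splitting itself does not need the operator norms.

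The main point to check is the first identity: the decomposition must hold on all of \(B_\rho\), and the mean-subtraction conventions must agree. \(P^{\act}_k\) is defined by Riesz transforms after zero extension of \(\eta u_k\otimes u_k\), and \(P^{\harm}_k:=p_k-P^{\act}_k\) is defined as a remainder. Hence the identity \(p_k=P^{\act}_k+P^{\harm}_k\) is exact by definition and needs no harmonicity. I would only verify that \(P^{\act}_k\in L^{3/2}(Q_\rho)\), which follows from Calder\'on--Zygmund boundedness and \(u_k\in L^3(Q_1)\) under \(\Psi_k(1)<\infty\). I would also verify that both \(P^{\act,0}_k\) and \(P^{\harm,0}_k\) subtract the \(B_\rho(s)\)-mean, so their sum is exactly the mean-free pressure that appears in \(\Phi_k(\rho)\).
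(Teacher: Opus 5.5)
Your proof is correct and is essentially the paper's argument: write \(p_k-(p_k)_{B_\rho}(s)=P^{\act,0}_k+P^{\harm,0}_k\), split each piece into its retained part and its tail under \(\Pi^{\prs}_{N,k}\) and \(\Pi^{\harm}_{H,k}\), and read off \(P_k^{\low}\), \(P_k^{\trunc}\), and \(P_k^{\harm\text{-}\tail}\) from the resulting four \(L^{3/2}\) powers. Your explicit constant \(C_{\rho,N,H}=2\rho^{-2}\) is a correct minor sharpening. So is your remark that the projection norms are not needed: the paper absorbs them into the constant, but they never actually enter the estimate.
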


\begin{proof}
Write \(p_k=P^{\act}_k+P^{\harm}_k\).  After subtracting spatial means over \(B_\rho\), the triangle inequality bounds the pressure oscillation by the \(L^{3/2}\)-powers of \(P^{\act,0}_k\) and \(P^{\harm,0}_k\).  Decompose each term into its projected low part and high tail.  The low parts are included in \(P_k^{\low}\), and the high parts are included in \(P_k^{\trunc}\) and \(P_k^{\harm\text{-}\tail}\).  The projection norms and \(\rho^{-2}\) factor are absorbed into \(C_{\rho,N,H}\).
\end{proof}

\begin{theorem}[CKN extraction for the amended distance]
\label{thm:det-ckn-extraction}
There exists
\[
  \delta^{\can+}_*=\delta^{\can+}_*(\varepsilon_{\CKN},\rho,\lambda,N,H)>0
\]
such that
\[
  \delta^{\can+}_k<\delta^{\can+}_*\quad\Longrightarrow\quad \Phi_k(\rho)<\varepsilon_{\CKN}.
\]
Equivalently,
\[
  \Phi_k(\rho)\ge\varepsilon_{\CKN}\quad\Longrightarrow\quad \delta^{\can+}_k\ge\delta^{\can+}_*.
\]
\end{theorem}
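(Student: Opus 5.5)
The plan is to split \(\Phi_k(\rho)\) into its velocity and pressure parts and bound each by a power of \(\delta^{\can+}_k\). For this I will use \cref{lem:det-velocity-interp} and \cref{lem:det-pressure-control}. Then I choose \(\delta^{\can+}_*\) so that the resulting bound sits strictly below \(\varepsilon_{\CKN}\). The argument is entirely quantitative; no compactness is used, so the constants depend only on \(\rho,\lambda,N,H\) and the fixed projections.

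\textbf{Velocity part.} Suppose \(\delta^{\can+}_k<\delta\) with \(\delta\le 1\). Since every summand of \(\delta^{\can+}_k\) is nonnegative, this gives \(A_k^{\core}+E_k^{\core}\le\delta\). \Cref{lem:det-velocity-interp} then yields
\[
  \rho^{-2}\int_{Q_\rho}|u_k|^3\le C'_{\rho,\lambda}\,\delta^{3/2}.
\]
This uses \(Q_\rho\subset Q_\lambda\), which holds because \(\rho\le\lambda\).

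\textbf{Pressure part.} Again by nonnegativity, \(P_k^{\tail,+}\le\delta^{\can+}_k<\delta\). \Cref{lem:det-pressure-control} then gives
\[
  \rho^{-2}\int_{-\rho^2}^0\int_{B_\rho}|p_k-(p_k)_{B_\rho}(s)|^{3/2}\le C_{\rho,N,H}\,\delta.
\]
Adding the two bounds,
\[
  \Phi_k(\rho)\le C'_{\rho,\lambda}\delta^{3/2}+C_{\rho,N,H}\delta\le \bigl(C'_{\rho,\lambda}+C_{\rho,N,H}\bigr)\delta \qquad\text{for }\delta\le1.
\]
So it suffices to take
\[
  \delta^{\can+}_*:=\min\Bigl\{1,\ \tfrac12\varepsilon_{\CKN}\big/\bigl(C'_{\rho,\lambda}+C_{\rho,N,H}\bigr)\Bigr\},
\]
which makes \(\Phi_k(\rho)\le\tfrac12\varepsilon_{\CKN}<\varepsilon_{\CKN}\). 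The equivalent formulation in the statement is just the contrapositive.

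\textbf{Main obstacle: uniformity of the constants in \(k\).} The constant in \cref{lem:det-pressure-control} involves the operator norms of \(\Pi^{\prs}_{N,k}\) and \(\Pi^{\harm}_{H,k}\), and these may a priori depend on \(k\). The statement writes \(\delta^{\can+}_*\) as depending only on \((\varepsilon_{\CKN},\rho,\lambda,N,H)\).

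I would handle this by one of two routes. The first is to assume, as part of the fixed detector convention, a uniform bound \(\sup_k\|\Pi^{\prs}_{N,k}\|+\|\Pi^{\harm}_{H,k}\|<\infty\). The second, since the chain \(0\le k\le K\) is finite, is to take the maximum of the constants over \(k\). In that case \(\delta^{\can+}_*\) implicitly depends on the chain, and I would note this dependence explicitly.

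I would also check the mean subtraction. The spatial mean \((p_k)_{B_\rho}(s)\) equals the sum of the means of \(P^{\act}_k\) and \(P^{\harm}_k\). Hence the pressure oscillation is exactly the \(L^{3/2}\)-norm of \(P^{\act,0}_k+P^{\harm,0}_k\), and the elementary bound \(|a+b|^{3/2}\le 2^{1/2}(|a|^{3/2}+|b|^{3/2})\) closes that step.
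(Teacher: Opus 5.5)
Your argument is correct and is essentially the paper's proof. Nonnegativity of the summands of \(\delta^{\can+}_k\) gives \(A_k^{\core}+E_k^{\core}<\delta\) and \(P_k^{\tail,+}<\delta\); \cref{lem:det-velocity-interp} and \cref{lem:det-pressure-control} then bound the two parts of \(\Phi_k(\rho)\) by \(C'_{\rho,\lambda}\delta^{3/2}\) and \(C_{\rho,N,H}\delta\), and \(\delta^{\can+}_*\) is chosen small (your explicit \(\min\{1,\cdot\}\) choice just makes this concrete).

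The \(k\)-uniformity issue you flag is in fact harmless. Because \(P_k^{\tail,+}\) contains both \(\|\Pi^{\prs}_{N,k}P^{\act,0}_k\|^{3/2}\) and \(\|(I-\Pi^{\prs}_{N,k})P^{\act,0}_k\|^{3/2}\) (and the analogous harmonic pair), the triangle inequality alone bounds the pressure oscillation by \(2\rho^{-2}P_k^{\tail,+}\), with no projection operator norms involved. So \(\delta^{\can+}_*\) is uniform in \(k\) without an extra assumption or a chain-dependent maximum, and your second route, which would make \(\delta^{\can+}_*\) depend on the chain, is not needed.
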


\begin{proof}
If \(\delta^{\can+}_k<\delta\), then \(A_k^{\core}+E_k^{\core}<\delta\) and \(P_k^{\tail,+}<\delta\).  By \cref{lem:det-velocity-interp}, the velocity contribution to \(\Phi_k(\rho)\) is bounded by \(C'_{\rho,\lambda}\delta^{3/2}\).  By \cref{lem:det-pressure-control}, the pressure contribution is bounded by \(C_{\rho,N,H}\delta\).  Choose \(\delta=\delta^{\can+}_*\) sufficiently small so that
\[
  C'_{\rho,\lambda}\delta^{3/2}+C_{\rho,N,H}\delta<\varepsilon_{\CKN}.
\]
This proves the implication; the contrapositive gives the second statement.
\end{proof}

\begin{theorem}[Finite-chain counting for the amended canonical detector]
\label{thm:det-main-counting}
Let \(w_k\ge0\).  Assume the finite-window constants in \cref{thm:det-amended-upper,thm:det-lower-audit,thm:det-ckn-extraction} are valid for \(k=0,\ldots,K\).  With
\[
  \Bad_{\can}:=\{0\le k\le K:\Phi_k(\rho)\ge\varepsilon_{\CKN}\},
\]
one has
\[
  \sum_{k\in\Bad_{\can}}w_k
  \le
  C_{\can+,K}
  \sum_{k=0}^{K}w_k(L_k^{\AEF}+P_k^{\tail,+}+R_k^{\PFE,+}),
\]
where one may take
\[
  C_{\can+,K}=\frac{C^{\can+,\mathrm{up}}_{\max,K}}{c^{\can+}_{\min,K}\delta^{\can+}_*},
\]
with
\[
  C^{\can+,\mathrm{up}}_{\max,K}:=\max_{0\le k\le K}C^{\can+,\mathrm{up}}_{k,N,H},
  \qquad
  c^{\can+}_{\min,K}:=\min_{0\le k\le K}c^{\can+}_{k,N,H}.
\]
\end{theorem}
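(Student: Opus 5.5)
The plan is to chain the three single-scale results, \cref{thm:det-ckn-extraction}, \cref{thm:det-lower-audit} and \cref{thm:det-amended-upper}, at each bad scale, and then sum over the chain exactly as in the proof of \cref{thm:weighted-counting}.

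Fix \(k\in\Bad_{\can}\), so \(\Phi_k(\rho)\ge\varepsilon_{\CKN}\). The contrapositive form of \cref{thm:det-ckn-extraction} gives \(\delta^{\can+}_k\ge\delta^{\can+}_*\). The threshold \(\delta^{\can+}_*\) depends only on \((\varepsilon_{\CKN},\rho,\lambda,N,H)\), so it is uniform in \(k\); this is why the final constant contains no \(k\)-indexed version of it.

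Next, \cref{thm:det-lower-audit} gives
\[
\delta^{\can+}_k\le \frac{M^{\can+}_{k,N,H}(D_k)}{c^{\can+}_{k,N,H}}\le \frac{M^{\can+}_{k,N,H}(D_k)}{c^{\can+}_{\min,K}}.
\]
Then \cref{thm:det-amended-upper} bounds the detector value:
\[
M^{\can+}_{k,N,H}(D_k)\le C^{\can+,\mathrm{up}}_{\max,K}\bigl(L_k^{\AEF}+P_k^{\tail,+}+R_k^{\PFE,+}\bigr).
\]
Combining these with \(\delta^{\can+}_k\ge\delta^{\can+}_*\) gives, for each bad \(k\),
\[
1\le C_{\can+,K}\bigl(L_k^{\AEF}+P_k^{\tail,+}+R_k^{\PFE,+}\bigr),
\]
with \(C_{\can+,K}\) exactly as stated.

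To finish, multiply this inequality by \(w_k\ge0\) and sum over \(k\in\Bad_{\can}\). The cost terms are nonnegative, since every summand is a norm power, an integral of a nonnegative quantity, or a named nonnegative residual. So the sum on the right can be enlarged to all \(0\le k\le K\), which yields the claim.

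There is no real obstacle here. The points to check are the uniformity of the constants over the finite chain and their well-definedness. The maximum and the minimum are taken over the finite set \(0\le k\le K\). The minimum is positive because each \(c^{\can+}_{k,N,H}>0\), and the maximum is finite because each \(C^{\can+,\mathrm{up}}_{k,N,H}<\infty\); both are guaranteed by the hypothesis that the finite-window constants are valid for \(k=0,\ldots,K\).
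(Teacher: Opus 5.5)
Your proposal is correct and is essentially the paper's proof: you chain CKN extraction, the lower audit bound and upper realization, use uniformity of \(\delta^{\can+}_*\), take the finite max and min of the constants, and use nonnegativity to enlarge the sum from \(\Bad_{\can}\) to all \(0\le k\le K\). The only cosmetic difference is order. You derive a per-scale inequality and then sum, whereas the paper first sums \(c^{\can+}_{\min,K}\delta^{\can+}_*\sum_{k\in\Bad_{\can}}w_k\le\sum_k w_k c^{\can+}_{k,N,H}\delta^{\can+}_k\) and then applies the audit and upper bounds.
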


\begin{proof}
For \(k\in\Bad_{\can}\), \cref{thm:det-ckn-extraction} gives \(\delta^{\can+}_k\ge\delta^{\can+}_*\).  Hence
\[
  c^{\can+}_{\min,K}\delta^{\can+}_*\sum_{k\in\Bad_{\can}}w_k
  \le \sum_{k=0}^{K}w_k c^{\can+}_{k,N,H}\delta^{\can+}_k.
\]
The lower audit bound controls the right-hand side by \(\sum_k w_kM^{\can+}_{k,N,H}(D_k)\).  The upper realization then gives the stated estimate after division by \(c^{\can+}_{\min,K}\delta^{\can+}_*\).
\end{proof}

\begin{remark}[Comparison of the two counting theorems]
The standard-PDE counting theorem \cref{thm:weighted-counting} and the detector counting theorem \cref{thm:det-main-counting} have the same finite-chain form but different closing mechanisms.  The former closes through one-component compactness and uses \(C_{3,k}\) as the proved channel.  The latter closes because the amended detector contains CKN-compatible energy and pressure coordinates.  They should not be presented as identical theorems.
\end{remark}


\begin{thebibliography}{99}

\bibitem{Leray1934}
J.~Leray,
\newblock Sur le mouvement d'un liquide visqueux emplissant l'espace,
\newblock \emph{Acta Mathematica} \textbf{63} (1934), 193--248.
\newblock DOI: \url{https://doi.org/10.1007/BF02547354}.

\bibitem{Hopf1951}
E.~Hopf,
\newblock {\"U}ber die Anfangswertaufgabe f{\"u}r die hydrodynamischen Grundgleichungen. Erhard Schmidt zu seinem 75.~Geburtstag gewidmet,
\newblock \emph{Mathematische Nachrichten} \textbf{4} (1950/51), no.~1--6, 213--231.
\newblock DOI: \url{https://doi.org/10.1002/mana.3210040121}.

\bibitem{Scheffer1976}
V.~Scheffer,
\newblock Partial regularity of solutions to the Navier--Stokes equations,
\newblock \emph{Pacific Journal of Mathematics} \textbf{66} (1976), no.~2, 535--552.
\newblock DOI: \url{https://doi.org/10.2140/pjm.1976.66.535}.

\bibitem{Scheffer1977}
V.~Scheffer,
\newblock Hausdorff measure and the Navier--Stokes equations,
\newblock \emph{Communications in Mathematical Physics} \textbf{55} (1977), no.~2, 97--112.
\newblock DOI: \url{https://doi.org/10.1007/BF01626512}.

\bibitem{CKN1982}
L.~Caffarelli, R.~Kohn, and L.~Nirenberg,
\newblock Partial regularity of suitable weak solutions of the Navier--Stokes equations,
\newblock \emph{Communications on Pure and Applied Mathematics} \textbf{35} (1982), no.~6, 771--831.
\newblock DOI: \url{https://doi.org/10.1002/cpa.3160350604}.

\bibitem{SohrWahl1986}
H.~Sohr and W.~von Wahl,
\newblock On the regularity of the pressure of weak solutions of Navier--Stokes equations,
\newblock \emph{Archiv der Mathematik} \textbf{46} (1986), 428--439.
\newblock DOI: \url{https://doi.org/10.1007/BF01210782}.

\bibitem{Lin1998}
F.-H.~Lin,
\newblock A new proof of the Caffarelli--Kohn--Nirenberg theorem,
\newblock \emph{Communications on Pure and Applied Mathematics} \textbf{51} (1998), no.~3, 241--257.
\newblock DOI: \url{https://doi.org/10.1002/(SICI)1097-0312(199803)51:3<241::AID-CPA2>3.0.CO;2-A}.

\bibitem{ConstantinETiti1994}
P.~Constantin, W.~E, and E.~S. Titi,
\newblock Onsager's conjecture on the energy conservation for solutions of Euler's equation,
\newblock \emph{Communications in Mathematical Physics} \textbf{165} (1994), no.~1, 207--209.
\newblock DOI: \url{https://doi.org/10.1007/BF02099744}.

\bibitem{Eyink1994}
G.~L. Eyink,
\newblock Energy dissipation without viscosity in ideal hydrodynamics. I. Fourier analysis and local energy transfer,
\newblock \emph{Physica D} \textbf{78} (1994), no.~3--4, 222--240.
\newblock DOI: \url{https://doi.org/10.1016/0167-2789(94)90117-1}.

\bibitem{DuchonRobert2000}
J.~Duchon and R.~Robert,
\newblock Inertial energy dissipation for weak solutions of incompressible Euler and Navier--Stokes equations,
\newblock \emph{Nonlinearity} \textbf{13} (2000), no.~1, 249--255.
\newblock DOI: \url{https://doi.org/10.1088/0951-7715/13/1/312}.

\bibitem{SereginSverak2002}
G.~A. Seregin and V.~\v{S}ver\'ak,
\newblock Navier--Stokes equations with lower bounds on the pressure,
\newblock \emph{Archive for Rational Mechanics and Analysis} \textbf{163} (2002), no.~1, 65--86.
\newblock DOI: \url{https://doi.org/10.1007/s002050200199}.

\bibitem{ESS2003}
L.~Escauriaza, G.~Seregin, and V.~\v{S}ver\'ak,
\newblock $L_{3,\infty}$-solutions of Navier--Stokes equations and backward uniqueness,
\newblock \emph{Russian Mathematical Surveys} \textbf{58} (2003), no.~2, 211--250.
\newblock DOI: \url{https://doi.org/10.1070/RM2003v058n02ABEH000609}.

\bibitem{KukavicaZiane2006}
I.~Kukavica and M.~Ziane,
\newblock One component regularity for the Navier--Stokes equations,
\newblock \emph{Nonlinearity} \textbf{19} (2006), no.~2, 453--469.
\newblock DOI: \url{https://doi.org/10.1088/0951-7715/19/2/012}.

\bibitem{CaoTiti2011}
C.~Cao and E.~S. Titi,
\newblock Global regularity criterion for the 3D Navier--Stokes equations involving one entry of the velocity gradient tensor,
\newblock \emph{Archive for Rational Mechanics and Analysis} \textbf{202} (2011), no.~3, 919--932.
\newblock DOI: \url{https://doi.org/10.1007/s00205-011-0439-6}.

\bibitem{SereginLectureNotes}
G.~A. Seregin,
\newblock \emph{Lecture Notes on Regularity Theory for the Navier--Stokes Equations},
\newblock World Scientific Publishing Co. Pte. Ltd., Hackensack, NJ, 2015, x+258 pp.
\newblock DOI: \url{https://doi.org/10.1142/9314}.

\bibitem{CheminZhang2016}
J.-Y.~Chemin and P.~Zhang,
\newblock On the critical one component regularity for 3-D Navier--Stokes system,
\newblock \emph{Annales scientifiques de l'\'{E}cole Normale Sup\'{e}rieure, S\'{e}rie 4} \textbf{49} (2016), no.~1, 131--167.
\newblock DOI: \url{https://doi.org/10.24033/asens.2278}.

\bibitem{CheminZhangZhang2017}
J.-Y.~Chemin, P.~Zhang, and Z.~Zhang,
\newblock On the critical one component regularity for 3-D Navier--Stokes system: general case,
\newblock \emph{Archive for Rational Mechanics and Analysis} \textbf{224} (2017), no.~3, 871--905.
\newblock DOI: \url{https://doi.org/10.1007/s00205-017-1089-0}.

\bibitem{HanLeiLiZhao2019}
B.~Han, Z.~Lei, D.~Li, and N.~Zhao,
\newblock Sharp one component regularity for Navier--Stokes,
\newblock \emph{Archive for Rational Mechanics and Analysis} \textbf{231} (2019), no.~2, 939--970.
\newblock DOI: \url{https://doi.org/10.1007/s00205-018-1292-7}.

\bibitem{BarkerPrange2021}
T.~Barker and C.~Prange,
\newblock Quantitative regularity for the Navier--Stokes equations via spatial concentration,
\newblock \emph{Communications in Mathematical Physics} \textbf{385} (2021), no.~2, 717--792.
\newblock DOI: \url{https://doi.org/10.1007/s00220-021-04122-x}.

\bibitem{KangNguyen2023}
K.~Kang and D.~D. Nguyen,
\newblock Local regularity criteria in terms of one velocity component for the Navier--Stokes equations,
\newblock \emph{Journal of Mathematical Fluid Mechanics} \textbf{25} (2023), no.~1, article no.~10, 15 pp.
\newblock DOI: \url{https://doi.org/10.1007/s00021-022-00754-8}.

\bibitem{AlbrittonBarkerPrange2023}
D.~Albritton, T.~Barker, and C.~Prange,
\newblock Epsilon regularity for the Navier--Stokes equations via weak--strong uniqueness,
\newblock \emph{Journal of Mathematical Fluid Mechanics} \textbf{25} (2023), no.~3, article no.~49, 12 pp.
\newblock DOI: \url{https://doi.org/10.1007/s00021-023-00780-0}.

\bibitem{YuOneComponent2026}
R.~Yu,
\newblock Finite-Scale One-Component Regularity via Harmonic Pressure for the 3D Navier--Stokes Equations,
\newblock arXiv preprint arXiv:2606.08352 [math.AP], 2026.
\newblock DOI: \url{https://doi.org/10.48550/arXiv.2606.08352}.

\bibitem{YuStrict2026}
R.~Yu,
\newblock Strict 2.5D Shadows for One-Component Navier--Stokes Regularity,
\newblock arXiv preprint arXiv:2606.11720 [math.AP], 2026.
\newblock DOI: \url{https://doi.org/10.48550/arXiv.2606.11720}.

\bibitem{YuSchur2026}
R.~Yu,
\newblock Schur Visibility and Anti-Phantom Reduction in One-Component Navier--Stokes Degeneration,
\newblock arXiv preprint arXiv:2606.12267 [math.AP], 2026.
\newblock DOI: \url{https://doi.org/10.48550/arXiv.2606.12267}.

\bibitem{YuInvisible2026}
R.~Yu,
\newblock Invisible Defect Cascades for Navier--Stokes Regularity,
\newblock arXiv preprint arXiv:2606.12756 [math.AP], 2026.
\newblock DOI: \url{https://doi.org/10.48550/arXiv.2606.12756}.

\bibitem{YuCriticalLedgers2026}
R.~Yu,
\newblock Critical Ledgers and Scale-Defect Cascades for Navier--Stokes,
\newblock arXiv preprint arXiv:2606.13887 [math.AP], 2026.
\newblock DOI: \url{https://doi.org/10.48550/arXiv.2606.13887}.

\bibitem{YuSingularityAuditTransfer2026}
R.~Yu,
\newblock Finite-Window Singularity Audits and Local-to-Clean Defect Transfer for Navier--Stokes,
\newblock arXiv preprint arXiv:2606.15086 [math.AP], 2026.
\newblock DOI: \url{https://doi.org/10.48550/arXiv.2606.15086}.

\bibitem{YuComputationalAntiPhantom2026}
R.~Yu,
\newblock Finite-Window Computational Anti-Phantom Theorems for Scale-Critical Navier--Stokes Defects,
\newblock arXiv preprint arXiv:2606.15456 [math.AP], 2026.
\newblock DOI: \url{https://doi.org/10.48550/arXiv.2606.15456}.

\end{thebibliography}
\end{document}